\documentclass[12pt]{article}
\usepackage[width=16.5cm,height=21cm]{geometry}
\usepackage[colorlinks,allcolors=blue]{hyperref}
\usepackage{amsmath,amssymb,amsthm,cite}
\usepackage[boxsize=1.2ex,aligntableaux=center]{ytableau}
\usepackage{colortbl,tabularx}
\newcolumntype{K}[1]{>{\centering$\arraybackslash}m{#1}<{$}}

\numberwithin{equation}{section}
\newtheorem{thm}{Theorem}[section]
\newtheorem{lem}[thm]{Lemma}
\newtheorem{prop}[thm]{Proposition}
\newtheorem{cor}[thm]{Corollary}
\theoremstyle{definition}
\newtheorem{exmpl}[thm]{Example}
\newtheorem{rem}[thm]{Remark}

\newcommand{\matr}[2]{\left[\begin{array}{#1}#2\end{array}\right]}
\usepackage{colortbl}
\definecolor{deletedcolor}{rgb}{0.8,0.8,0.8}

\newcolumntype{d}{>{\columncolor{deletedcolor}[1.0\tabcolsep]}c}
\newcommand{\bC}{\mathbb{C}}
\newcommand{\bN}{\mathbb{N}}
\newcommand{\bZ}{\mathbb{Z}}
\newcommand{\adj}{\operatorname{adj}}
\newcommand{\al}{\alpha}
\newcommand{\be}{\beta}
\newcommand{\ga}{\gamma}
\newcommand{\la}{\lambda}
\newcommand{\si}{\sigma}
\newcommand{\id}{\operatorname{id}}
\newcommand{\reverse}[1]{\operatorname{rev}(#1)}
\newcommand{\total}[1]{|#1|}
\newcommand{\medstrut}{\vphantom{\int_{0_0}^{1^1}}}
\newcommand{\bigstrut}{\vphantom{\displaystyle\int_{0}^{1}}}
\newcommand{\detmatr}[2]{\left|\begin{array}{#1}#2\end{array}\right|}
\newcommand{\diag}{\operatorname{diag}}
\newcommand{\schv}{s^{\vphantom{n}}} 

\newcommand{\myurl}[1]{\href{#1}{#1}}

\author{Egor A.\ Maximenko, Mario Alberto Moctezuma-Salazar}
\title{Cofactors and eigenvectors of banded Toeplitz matrices:\\
Trench formulas via skew Schur polynomials
\thanks{The research was partially supported
by IPN-SIP project, Instituto Polit\'{e}cnico Nacional, Mexico, and by Conacyt, Mexico.}}

\begin{document}
\maketitle

\begin{abstract}
The Jacobi--Trudi formulas imply that
the minors of the banded Toeplitz matrices
can be written as certain skew Schur polynomials.
In 2012, Alexandersson expressed the corresponding
skew partitions in terms of the indices of the struck-out rows and columns.
In the present paper, we develop the same idea and obtain some new applications.
First, we prove a slight generalization and modification of Alexandersson's formula.
Then, we deduce corollaries about the cofactors and eigenvectors of banded Toeplitz matrices,
and give new simple proofs to the corresponding formulas published by Trench in 1985.\\[1ex]
MSC 2010: 05E05 (primary), 15B05, 11C20, 15A09, 15A18 (secondary). \\[1ex]
Keywords: Toeplitz matrix, skew Schur function, minor, cofactor, eigenvector.
\end{abstract}

\tableofcontents

\section{Introduction}
\label{sec:intro}

The first exact formulas for banded Toeplitz determinants
were found by Widom \cite{Widom1958}
and by Baxter and Schmidt~\cite{BaxterSchmidt1961}.
Trench \cite{Trench1985inv,Trench1985eig} discovered another equivalent formula for the determinants and exact formulas for the inverse matrices and eigenvectors.
Among many recent investigations
on Toeplitz matrices and their generalizations we mention \cite{BG2017,BBGM2015,BBGM2016,BFGM2015,DeiftItsKrasovsky2013,
Elouafi2014,Elouafi2015,GaroniSerra2016}.
See also the books \cite{BG2005,BS1999,BS2006,GrenanderSzego1958}
which employ an analytic approach and contain
asymptotic results on Toeplitz determinants,
inverse matrices, eigenvalue distribution, etc.

It is obvious from the Jacobi--Trudi formulas
that there is a simple connection between Toeplitz minors and skew Schur polynomials.
Surprisingly for us, this connection was not mentioned in the works cited above.

Gessel \cite[Section~7]{Gessel1990} showed that some combinatorial generating functions can be written as Toeplitz determinants
(without actually naming them ``Toeplitz determinants'').
With the help of Gessel's formula, Borodin and Okounkov \cite{BorodinOkounkov2000}
expressed the general Toeplitz determinant as the Fredholm determinant of some operator acting on the Hilbert space $\ell^2$.
Tracy and Widom \cite{TracyWidom2001} used Gessel's formula
to compute some asymptotic distributions related to combinatorial objects.
They also observed that the minors located in the first columns of the triangular Toeplitz matrix $[h_{j-k}]_{j,k}$ can be written as Schur polynomials,
and expressed the corresponding partitions in terms of the selected rows.

Bump and Diaconis~\cite{BumpDiaconis2002}
considered Toeplitz minors with a fixed number of struck-out rows and columns, and studied their asymptotic behavior as the order of the minor tends to infinity.
They indexed the minors with two partitions without writing explicitly the relation between these partitions and the indices of the struck-out rows and columns.

Lascoux in his book~\cite{Lascoux2003}
defined skew Schur functions $s_{\la/\mu}$ as minors of the triangular Toeplitz matrix $[h_{k-j}]_{j,k}$ and explicitly related the partitions $\la$ and $\mu$
with the indices of the selected rows and columns.
Reiner, Shaw, and van Willigenburg mentioned the same relation
in their article \cite{ReinerShawWilligenburg2007}
about the problem of coinciding skew Schur polynomials.

Alexandersson \cite{A2012} found a new combinatorial proof
of Widom's formula for the determinants of Toeplitz matrices.
As an auxiliary result \cite[Proposition~3]{A2012}, he wrote the minors of triangular Toeplitz matrices $[e_{k-j}]_{j,k}$ as skew Schur polynomials $s_{\al/\be}$, with certain partitions $\al$ and $\be$ expressed explicitly in terms of the struck-out rows and columns.

The aim of this paper is to complement \cite{A2012}
by showing that some other classical results from the theory 
of Toeplitz matrices can also be naturally embedded
into the theory of skew Schur polynomials.
When it comes to building bridges between these two theories
we prefer to stay on the ``Toeplitz side.''
Thus, we start with a general (non-necessarily triangular)
banded Toeplitz matrix $T_n(a)$
generated by an arbitrary Laurent polynomial $a$
and express its minors as certain skew Schur polynomials
evaluated at the zeros of $a$.
Then we deduce several formulas for the cofactors and eigenvectors,
and give new proofs for the classical Trench's formulas.

\section{Main results}
\label{sec:results}

Let $a$ be a Laurent polynomial of the form
\begin{equation}\label{eq:Laurent}
a(t)=\sum_{k=p-w}^p a_k t^k
=a_p t^{p-w} \prod_{j=1}^{w} (t-z_j),
\end{equation}
where $p\in\bN_0=\{0,1,2,\ldots\}$,
$w\in\bN=\{1,2,\ldots\}$,
$a_{p-w},\ldots,a_p$ are some complex numbers
and $a_p\ne0$.
The coefficients $a_k$ are defined to be zero if $k>p$ or $k<p-w$.
For every $n$ in $\bN$ denote by $T_n(a)$ the $n\times n$ banded Toeplitz matrix generated by $a$:
\begin{equation}\label{eq:Toeplitz}
T_n(a)=[a_{j-k}]_{j,k=1}^n.
\end{equation}
Note that if $n$ is sufficiently large,
$p$ is the index of the last nonzero diagonal below the main diagonal
and $w+1$ is the width of the band.

By Vieta's formulas, the numbers $a_k/a_p$
can be written as elementary symmetric polynomials
in the variables $z_1,\ldots,z_w$, with alternating signs.
Thus, every minor of $T_n(a)$, after factorizing an appropriate power of $a_p$, 
is a symmetric polynomial in $z_1,\ldots,z_w$.
It turns out that it is a certain skew Schur polynomial, up to a sign.
Given a skew partition $\la/\mu$,
we denote by $s_{\la/\mu}(z_1,\ldots,z_w)$ or just by $s_{\la/\mu}$
the corresponding skew Schur polynomial in variables $z_1,\ldots,z_w$.
Given $k$ in $\bN_0$, we denote by $h_k(z_1,\ldots,z_w)$
the complete homogeneous polynomial of degree $k$.
The details are given in Section~\ref{sec:Schur}.

For every two integers $n,m$ such that $0\le m\le n$, we denote by $I^m_n$ the set of all strictly increasing functions $\{1,\ldots,m\}\to\{1,\ldots,n\}$.
Every element $\rho$ of $I^m_n$ can be written as an $m$-tuple $(\rho_1,\ldots,\rho_m)$, where
$\rho_1,\dots,\rho_m\in\{1,\ldots,n\}$ and
$\rho_1<\dots<\rho_m$.
We identify the function $\rho$ with the subset $\{\rho_1,\ldots,\rho_m\}$ of $\{1,\ldots,n\}$.
Let $\total{\rho}$ denote the sum $\rho_1+\cdots+\rho_m$.

Given $A$ in $\bC^{n\times n}$,
$m \in \{0,\ldots,n\}$, and $\rho,\si$ in $I^m_n$,
we denote by $A_{\rho,\si}$ the submatrix of $A$
located in the intersection of the rows
$\rho_1,\ldots,\rho_m$ and the columns $\si_1,\ldots,\si_m$:
\[
A_{\rho,\si}=
\bigl[A_{\rho_j,\si_k}\bigr]_{j,k=1}^m.
\]
Notice that if $m=0$, then the submatrix $A_{\rho,\si}$ is void,
and its determinant is $1$.

Working with integer tuples we use a comma to denote the concatenation
and superior indices to denote the repetition.
For example, $(5^3,3^2)=(5,5,5,3,3)$.
For every tuple $\xi=(\xi_1,\ldots,\xi_d)$
we denote by $\reverse{\xi}$ the reversed tuple
$(\xi_d,\ldots,\xi_1)$.
Let $\id_d$ denote the identity tuple
$(1,2,\ldots,d)$.

We start with two equivalent formulas for the minors of banded Toeplitz matrices.

\begin{thm}\label{thm:main}
Let $a$ be a Laurent polynomial of the form \eqref{eq:Laurent},
$n,m\in\bZ$, $0\le m\le n$,
$\rho,\si\in I^m_n$, $d=n-m$,
and $\xi,\eta\in I^d_n$ be the complements of $\rho,\si$, respectively.
Then
\begin{align}
\label{eq:main}
\det (T_n(a)_{\rho,\sigma})
&=(-1)^{pm+\total{\rho}+\total{\si}}\;a_p^m\;
\schv_{(m^p,\reverse{\xi-\id_d})/\reverse{\eta-\id_d}}(z_1,\ldots,z_w),
\intertext{and also}
\label{eq:Alexandersson}
\det (T_n(a)_{\rho,\sigma})
&=(-1)^{pm+\total{\rho}+\total{\si}}\;a_p^m\;
\schv_{(m^p,m^d+\id_d-\eta)/(m^d+\id_d-\xi)}(z_1,\ldots,z_w).
\end{align}
\end{thm}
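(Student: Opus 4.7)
The approach is to reduce the minor to a determinant of elementary symmetric polynomials and then invoke the dual Jacobi--Trudi identity. Applying Vieta's formulas to \eqref{eq:Laurent} gives
\[
a_k \;=\; (-1)^{p-k}\, a_p\, e_{p-k}(z_1,\ldots,z_w)\qquad (k\in\bZ),
\]
with the convention that $e_j$ vanishes outside $0\le j\le w$, which agrees with $a_k=0$ outside the band. Substituting into $[a_{\rho_j-\sigma_k}]_{j,k=1}^m$ and splitting $(-1)^{p-\rho_j+\sigma_k}=(-1)^p(-1)^{\rho_j}(-1)^{\sigma_k}$, I would pull $a_p(-1)^p(-1)^{\rho_j}$ out of row $j$ and $(-1)^{\sigma_k}$ out of column $k$ to get
\[
\det(T_n(a)_{\rho,\sigma}) \;=\; (-1)^{pm+\total{\rho}+\total{\si}}\,a_p^m\,\det\bigl[e_{p-\rho_j+\sigma_k}(z_1,\ldots,z_w)\bigr]_{j,k=1}^m.
\]

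Next, I would apply the dual Jacobi--Trudi identity $s_{\la/\mu}=\det[e_{\la'_j-\mu'_k-j+k}]_{j,k=1}^{m}$ with the choices
\[
\la'_j \;=\; p+d-\rho_j+j,\qquad \mu'_k \;=\; d+k-\sigma_k,
\]
so that $\la'_j-\mu'_k-j+k=p-\rho_j+\sigma_k$. The inequalities $j\le\rho_j\le d+j$ and $k\le\sigma_k\le d+k$ inherent in $I^m_n$ ensure that $\la'$ and $\mu'$ are weakly decreasing sequences of non-negative integers, hence genuine conjugate partitions of length $\le m$.

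The core combinatorial step is then to recover $\la$ and $\mu$ themselves. Using beta numbers, $\la'_j+m-j=n+p-\rho_j$, and the set of beta numbers of $\la$ (of length $p+d$) inside the $(p+d)\times m$ rectangle decomposes as $\{0,1,\ldots,p-1\}\sqcup\{n+p-\xi_i:1\le i\le d\}$, where $\xi=\{1,\ldots,n\}\setminus\rho$. Sorting this set in decreasing order and subtracting $p+d-i$ produces $\la_i=m$ for $1\le i\le p$ and $\la_{p+k}=\xi_{d+1-k}-(d+1-k)$ for $1\le k\le d$, i.e.\ $\la=(m^p,\reverse{\xi-\id_d})$. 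The analogous (and simpler, since $\mu'$ has no rectangle block) calculation yields $\mu=\reverse{\eta-\id_d}$, establishing \eqref{eq:main}. For \eqref{eq:Alexandersson}, I would invoke the $180^\circ$-rotation identity $s_{\la/\mu}=s_{\tilde\mu/\tilde\la}$, where $\tilde\la,\tilde\mu$ denote the rectangle complements inside the $(p+d)\times m$ rectangle; a direct check gives $\tilde\mu=(m^p,m^d+\id_d-\eta)$ and $\tilde\la=m^d+\id_d-\xi$, matching the second formula.

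I expect the main obstacle to be the beta-number bookkeeping: verifying that the ``low'' beta numbers $\{0,\ldots,p-1\}$ do not interleave with the ``high'' block $\{n+p-\xi_i\}$ hinges on the inequality $\xi_d\le n$, and reading off the tail in the correct order requires carefully tracking the reversal $\reverse{\xi-\id_d}$. The sign manipulations and the conjugation/rotation identity needed for \eqref{eq:Alexandersson} are then routine once the parametrization of $\la',\mu'$ above is in hand.
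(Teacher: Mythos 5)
Your proposal is correct in substance, but it takes the route that the paper only sketches in Remark~\ref{rem:another_way} rather than the one it actually carries out. The paper first reduces the general minor to a minor of the \emph{triangular} matrix $T_{n+p}(b)$ (Lemma~\ref{lem:from_minor_Ta_to_minor_Tb}), then handles the triangular case by the dual Jacobi--Trudi formula (Lemma~\ref{lem:triangular_minor_by_JT1}) and converts the conjugate data into $\la$ and $\mu$ via an explicit complementation lemma (Lemma~\ref{lem:complement}), finally obtaining \eqref{eq:Alexandersson} from \eqref{eq:main} by the flip operation of Proposition~\ref{prop:skew_Schur_flip}. You instead apply the dual Jacobi--Trudi identity directly to $\det[e_{p-\rho_j+\si_k}]$ with $\la'_j=p+d-\rho_j+j$, $\mu'_k=d+k-\si_k$, and recover $\la,\mu$ by beta-number complementation in the $(p+d)\times m$ rectangle; your $180^\circ$-rotation step is exactly the paper's flip $(\la/\mu)^\ast$. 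The sign extraction and the identity $a_k=(-1)^{p-k}a_pe_{p-k}$ coincide with the paper's \eqref{eq:a_through_e} and \eqref{eq:Tminor_through_dete}. What the paper's detour through the triangular case buys is a self-contained reproof of Alexandersson's original formula (Lemma~\ref{lem:triangular_minor_xi_eta}) along the way; your direct route is shorter. The beta-number argument and the paper's Lemma~\ref{lem:complement} encode the same combinatorial fact (complements of sets correspond to conjugates of partitions).

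One bookkeeping point to fix before this becomes a proof: the set you exhibit, $\{0,\ldots,p-1\}\sqcup\{n+p-\xi_i\}$, is the complement of $\{\la'_j+m-j\}=\{n+p-\rho_j\}$ in $\{0,\ldots,n+p-1\}$, i.e.\ it is the set $\{m-1+i-\la_i\}$, \emph{not} the set $\{\la_i+(p+d)-i\}$. With your stated recipe (``sort in decreasing order and subtract $p+d-i$'') you would need the reflected set $\{n,\ldots,n+p-1\}\sqcup\{\xi_i-1\}$; alternatively, keep your set, sort it increasingly, and read off $\la_i=m-1+i-(\text{$i$-th element})$. Either repair yields $\la=(m^p,\reverse{\xi-\id_d})$ as you claim, and the analogous computation gives $\mu=\reverse{\eta-\id_d}$, so the gap is purely notational --- but as written the recipe and the set do not match.
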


Let us rewrite the skew partitions from
\eqref{eq:main} and \eqref{eq:Alexandersson} in the expanded form.
The skew Schur polynomial from
\eqref{eq:main} is $s_{\la/\mu}$, where
\begin{equation}\label{eq:lamu_through_xieta}
\lambda=(\underbrace{m,\ldots,m}_p,\xi_d-d,\ldots,\xi_1-1),\quad
\mu=(\eta_d-d,\ldots,\eta_1-1),
\end{equation}
and the skew Schur polynomial from
\eqref{eq:Alexandersson} can be written as $s_{\al/\be}$, where
\begin{equation}\label{eq:albe_through_xieta}
\al=(\underbrace{m,\ldots,m}_p,m+1-\eta_1,\ldots,m+d-\eta_d),\quad
\be=(m+1-\xi_1,\ldots,m+d-\xi_d).
\end{equation}
If a pair of partitions $\la,\mu$ does not form a skew partition, i.e.\ if $\la_j<\mu_j$ for some $j$, then we define the corresponding skew Schur polynomial $s_{\la/\mu}$ to be zero;
this convention is justified by Proposition~\ref{prop:not_skew_partition}.

Formula \eqref{eq:Alexandersson} is a simple generalization and modification of formula (3) from \cite{A2012}.
We prove Theorem~\ref{thm:main} in Section~\ref{sec:main}
combining ideas from \cite{A2012} with a couple of other tools.

In the particular case of Toeplitz determinants,
i.e.\ when $d=0$, both identities \eqref{eq:main} and \eqref{eq:Alexandersson} reduce to
\begin{equation}\label{eq:Tdet}
\det(T_n(a))
=(-1)^{pn} a_p^n \schv_{(n^p)}(z_1,\ldots,z_w).
\end{equation}
In fact, \eqref{eq:Tdet} follows immediately from the
dual Jacobi--Trudi formula for Schur polynomials,
without the need for skew Schur polynomials.
Formula \eqref{eq:Tdet} was noted by Bump and Diaconis 
\cite[proof of Theorem~1]{BumpDiaconis2002}.
Equivalent forms of \eqref{eq:Tdet} were discovered previously
by Baxter and Schmidt \cite{BaxterSchmidt1961}
and by Trench~\cite{Trench1985inv,Trench1985eig};
see Remark~\ref{rem:Baxter_Schmidt_and_Trench_formulas}.

Theorem~\ref{thm:main} says that every Toeplitz minor
can be written as a skew Schur polynomial.
Bump and Diaconis mentioned this fact in \cite[pp.~253 and 254]{BumpDiaconis2002},
but they did not express the skew partition in terms of the struck-out rows and columns.
Theorem~\ref{thm:main} also easily implies that every skew Schur polynomial in variables $z_1,\ldots,z_w$ can be written as an appropriate minor
of a Toeplitz matrix generated by a Laurent polynomial with zeros $z_1,\ldots,z_w$;
see Proposition~\ref{prop:skew_Schur_polynomial_as_Toeplitz_minor}.

Applying Theorem~\ref{thm:main},
we obtain the following equivalent formulas for the cofactors,
i.e.\ for the entries of the adjugate matrix.

\begin{thm}\label{thm:adj}
Let $a$ be of the form \eqref{eq:Laurent} and $n\ge1$.
Then for every $r,s$ in $\{1,\ldots,n\}$
\begin{align}
\label{eq:adj_through_skew_Schur}
\adj(T_n(a))_{r,s}
&=(-1)^{p(n-1)}\,a_p^{n-1}\,
\schv_{((n-1)^p,s-1)/(r-1)},
\\[1ex]
\label{eq:adj_through_skew_Schur_Alexandersson}
\adj(T_n(a))_{r,s}
&=(-1)^{p(n-1)}\,a_p^{n-1}\,\schv_{((n-1)^p,n-r)/(n-s)},
\\[1ex]
\label{eq:adj_through_Schur}
\adj(T_n(a))_{r,s}
&= (-1)^{p(n-1)}\,a_p^{n-1}\,
\sum_{k=\max(0,s-r)}^{\min(n-r,s-1)}
s_{((n-1)^{p-1},n+s-r-1-k,k)}\qquad(\text{for}\ p\ge1),
\\
\label{eq:Trench}
\adj(T_n(a))_{r,s}
&= (-1)^{pn}\,a_p^{n-1}
\left(h_{s-r-p} s_{(n^p)} - \sum_{k=0}^{p-1} (-1)^k h_{s+k-p} s_{(n^{p-k-1},(n-1)^k,n-r)}\right).
\end{align}
\end{thm}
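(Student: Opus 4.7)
The plan is to derive the four formulas in cascade, since \eqref{eq:adj_through_skew_Schur} and \eqref{eq:adj_through_skew_Schur_Alexandersson} are direct specializations of Theorem~\ref{thm:main}, while \eqref{eq:adj_through_Schur} and \eqref{eq:Trench} will follow from them by combinatorial and algebraic expansions, respectively.

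First, I would apply Theorem~\ref{thm:main} with $m = n-1$, $d = 1$, $\rho = \{1,\ldots,n\}\setminus\{s\}$ (hence $\xi = (s)$), and $\sigma = \{1,\ldots,n\}\setminus\{r\}$ (hence $\eta = (r)$). Combining the defining identity $\adj(T_n(a))_{r,s} = (-1)^{r+s}\det((T_n(a))_{\rho,\sigma})$ with the fact that $|\rho| + |\sigma| = n(n+1) - r - s$, the sign $(-1)^{p(n-1)+|\rho|+|\sigma|}$ from Theorem~\ref{thm:main} multiplied by $(-1)^{r+s}$ collapses to $(-1)^{p(n-1)}$, because $n(n+1)$ is always even. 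Substituting $\xi$ and $\eta$ into \eqref{eq:main} and \eqref{eq:Alexandersson} and simplifying the one-element tuples $\xi - \id_1$ and $\eta - \id_1$ yields \eqref{eq:adj_through_skew_Schur} and \eqref{eq:adj_through_skew_Schur_Alexandersson}, respectively.

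Next, to derive \eqref{eq:adj_through_Schur}, I would start from \eqref{eq:adj_through_skew_Schur} and invoke Pieri's rule for the single-row case: since $\mu = (r-1)$ is a one-row partition, $s_{\lambda/(r-1)}$ equals the sum of $s_\nu$ over partitions $\nu \subseteq \lambda$ such that $\lambda/\nu$ is a horizontal strip of size $r-1$. For $\lambda = ((n-1)^p, s-1)$, the horizontal-strip constraints $\lambda_{i+1} \leq \nu_i \leq \lambda_i$ force $\nu_i = n-1$ for all $i \leq p-1$, so all removed boxes lie in the last two rows of the diagram. Setting $k = \nu_{p+1}$, the cardinality condition gives $\nu_p = n+s-r-1-k$, and the bounds on $\nu_p$ and $\nu_{p+1}$ translate exactly into $\max(0, s-r) \leq k \leq \min(n-r, s-1)$, producing the stated sum.

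Finally, for \eqref{eq:Trench}, I would instead start from \eqref{eq:adj_through_skew_Schur_Alexandersson} and apply the skew Jacobi--Trudi identity to the $(p+1) \times (p+1)$ determinant representing $s_{((n-1)^p, n-r)/(n-s)}$. A direct calculation shows that the first-column entries are $h_{s-i}$ for $i = 1,\ldots,p$ and $h_{s-r-p}$ for $i = p+1$, which already accounts for the two kinds of $h$-terms in \eqref{eq:Trench}. Expanding along this column, the minor complementary to row $p+1$ is immediately the Jacobi--Trudi determinant for $s_{(n^p)}$, while the minor complementary to row $i_0 \in \{1,\ldots,p\}$ becomes, after reading off its row indices, the Jacobi--Trudi determinant for $s_{(n^{i_0-1},(n-1)^{p-i_0},n-r)}$. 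Substituting $k = p - i_0$ and collecting the overall sign $(-1)^{p(n-1)}\cdot(-1)^p = (-1)^{pn}$ reproduces \eqref{eq:Trench}. The main obstacle is identifying these minors correctly: choosing \eqref{eq:adj_through_skew_Schur_Alexandersson} rather than \eqref{eq:adj_through_skew_Schur} as the starting point is essential, because only in the Alexandersson form do the partitions emerging from the first-column expansion have the specific shape $(n^{p-k-1},(n-1)^k,n-r)$ with last part $n-r$ that appears in Trench's formula.
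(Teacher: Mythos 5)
Your proposal is correct and follows essentially the same route as the paper: specializing Theorem~\ref{thm:main} with $d=1$, $\xi=(s)$, $\eta=(r)$ for \eqref{eq:adj_through_skew_Schur} and \eqref{eq:adj_through_skew_Schur_Alexandersson}, applying the skew Pieri rule to $((n-1)^p,s-1)/(r-1)$ for \eqref{eq:adj_through_Schur} (the paper isolates this as a separate lemma, but the computation is identical), and expanding the Jacobi--Trudi determinant of $s_{((n-1)^p,n-r)/(n-s)}$ along its first column for \eqref{eq:Trench}. Your sign bookkeeping and your observation that the Alexandersson form is the right starting point for Trench's formula both match the paper's argument.
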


The expansion in the right-hand side of \eqref{eq:adj_through_Schur}
does not make sense for $p=0$;
in this trivial case \eqref{eq:adj_through_skew_Schur},
\eqref{eq:adj_through_skew_Schur_Alexandersson},
and \eqref{eq:Trench} reduce to
$\adj(T_n(a))_{r,s}=a_0^{n-1} \schv_{(r-s)}=a_0^{n-1} h_{r-s}$.

Of course, \eqref{eq:adj_through_skew_Schur}--\eqref{eq:Trench} can be easily converted into formulas for the entries of the inverse matrix $T_n(a)^{-1}$:
suppose that $s_{(n^p)}\ne0$,
change the left-hand side for $(T_n(a)^{-1})_{r,s}$
and divide the right-hand sides by
$(-1)^{pn}a_p^n \schv_{(n^p)}$.

Formulas \eqref{eq:adj_through_skew_Schur}
and \eqref{eq:adj_through_skew_Schur_Alexandersson}
are immediate corollaries from Theorem~\ref{thm:main}.
In Section~\ref{sec:adj} we prove 
\eqref{eq:adj_through_Schur}
applying a particular case of the Littlewood--Richardson rule
and deduce \eqref{eq:Trench} from
\eqref{eq:adj_through_skew_Schur_Alexandersson}
using the Jacobi--Trudi formula
and expanding the corresponding determinant by the first column.
Note that \eqref{eq:Trench} is essentially Trench's formula
for the entries of the inverse matrix \cite[Theorem~3]{Trench1985inv}; 
it has an advantage over \eqref{eq:adj_through_Schur}
because the number of summands in \eqref{eq:Trench}
does not depend on $n$.

Finally, in the next theorem we
give a simple formula for the components of an eigenvector of $T_n(a)$, supposing that the associated eigenvalue $x$ is known.
More precisely, we construct a vector $v$ in $\bC^n$ satisfying $T_n(a)v=xv$;
there is no guarantee that $v$ is nonzero.

\begin{thm}\label{thm:eigvec}
Let $a$ be of the form \eqref{eq:Laurent},
$p\ge 1$, $n\ge 1$, and $x$ be an eigenvalue of $T_n(a)$.
Denote by $z_1,\ldots,z_w$ the zeros of $a-x$.
Then the vector $v=[v_r]_{r=1}^{n}$ with components
\begin{equation}\label{eq:eigvec_components}
v_r=s_{((n-1)^{p-1},n-r)}(z_1,\ldots,z_w)
\end{equation}
satisfies $T_n(a)v=xv$.
If $1\le p\le w$ and the zeros of $a-x$ are simple, then the vector $v$
can be written as a linear combination of geometric progressions
with ratios $1/z_1,\ldots,1/z_w$ and some complex coefficients
$C_1,\ldots,C_w$:
\begin{equation}\label{eq:eigvec_Trench}
v_r = \sum_{j=1}^w C_j^{\vphantom{n-r+w-p}} z_j^{n-r+w-p}.
\end{equation}
\end{thm}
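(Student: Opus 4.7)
The equation $T_n(a)v=xv$ is equivalent to $T_n(a-x)v=0$, and since $x$ is an eigenvalue of $T_n(a)$ the matrix $T_n(a-x)$ is singular; consequently every column of $\adj T_n(a-x)$ lies in $\ker T_n(a-x)$. My plan is to identify $v$, up to a nonzero scalar, with the first column of $\adj T_n(a-x)$ via Theorem~\ref{thm:adj}, and afterwards to extract the monomials $z_j^{\,n-r+w-p}$ from the Schur polynomial defining $v_r$ by means of the bi-alternant formula.

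Since $a-x$ is again of the form \eqref{eq:Laurent}, with the same leading coefficient $a_p$, the same index $p$, and with zeros $z_1,\ldots,z_w$ (automatic whenever $0\in[p-w,p]$, and in particular throughout the range $1\le p\le w$ relevant to \eqref{eq:eigvec_Trench}), I would apply formula \eqref{eq:adj_through_Schur} to $T_n(a-x)$ and specialize to $s=1$. For any $r$ with $1\le r\le n$ the summation range $\max(0,1-r)\le k\le\min(n-r,0)$ collapses to the single value $k=0$, yielding the single term $s_{((n-1)^{p-1},n-r,0)}=s_{((n-1)^{p-1},n-r)}(z_1,\ldots,z_w)=v_r$. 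Hence
\[
\adj T_n(a-x)_{r,1}=(-1)^{p(n-1)}\,a_p^{n-1}\,v_r,
\]
and the identity $T_n(a)v=xv$ follows.

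For \eqref{eq:eigvec_Trench}, assuming $1\le p\le w$ and simple zeros, I would invoke the bi-alternant formula
\[
s_\lambda(z_1,\ldots,z_w)=\frac{\det\!\bigl[z_i^{\lambda_j+w-j}\bigr]_{i,j=1}^{w}}{\det\!\bigl[z_i^{w-j}\bigr]_{i,j=1}^{w}}
\]
on $\lambda=((n-1)^{p-1},n-r)$ padded by $w-p$ trailing zeros (a valid partition of length $w$ precisely because $p\le w$). Writing out the exponents $\lambda_j+w-j$ shows that they equal $n-1+w-j$ for $j<p$, equal $n-r+w-p$ at $j=p$, and equal $w-j$ for $j>p$, so exactly the $p$-th column of the numerator depends on $r$. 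Expanding the numerator along that column produces
\[
v_r=\sum_{j=1}^{w}C_j\,z_j^{\,n-r+w-p},
\]
with $C_j$ equal to $(-1)^{j+p}$ times the corresponding $(w-1)\times(w-1)$ minor, divided by the Vandermonde $\prod_{i<k}(z_k-z_i)$, which is nonzero because the zeros are simple.

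Neither step is genuinely hard: the main points requiring attention are the collapse of the sum over $k$ in \eqref{eq:adj_through_Schur} to a single term when $s=1$, and the correct padding of $\lambda$ to length $w$ before applying the bi-alternant formula; once these are noted, both assertions reduce to direct applications of the earlier results.
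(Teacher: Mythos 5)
Your proposal is correct and follows essentially the same route as the paper: both identify $v$, up to the nonzero scalar $(-1)^{p(n-1)}a_p^{n-1}$, with the first column of $\adj T_n(a-x)$, which lies in the null-space of the singular matrix $T_n(a-x)$, and then derive \eqref{eq:eigvec_Trench} by expanding the bi-alternant numerator of $s_{((n-1)^{p-1},n-r)}$ along the unique line that depends on $r$, so that the cofactors $C_j$ are independent of $r$. The only cosmetic difference is that you obtain the first adjugate column from \eqref{eq:adj_through_Schur} (whose sum indeed collapses to the single term $k=0$ when $s=1$), whereas the paper uses \eqref{eq:adj_through_skew_Schur_Alexandersson} with $s=1$ together with the identity $s_{((n-1)^p,n-r)/(n-1)}=s_{((n-1)^{p-1},n-r)}$, i.e.\ its Corollary~\ref{cor:adj_first_column}.
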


In Section~\ref{sec:eigvec} we prove Theorem~\ref{thm:eigvec}
and provide explicit expressions for $C_1,\ldots,C_w$.
Formula \eqref{eq:eigvec_Trench} essentially coincides
with Trench's formula \cite[Theorem~1]{Trench1985eig}.

Theorems~\ref{thm:main}, \ref{thm:adj}, \ref{thm:eigvec},
except for \eqref{eq:eigvec_Trench}, are also true for Toeplitz matrices generated by semi-infinite Laurent series; in this situation one has to work with skew Schur functions instead of skew Schur polynomials; see Remark~\ref{rem:Laurent_series}.

All formulas from Theorems~\ref{thm:main}--\ref{thm:eigvec},
except for \eqref{eq:adj_through_Schur},
are very efficient for small values of $w$, $p$, and $d$, even if $n$ is large.

We tested the main results with symbolic and numerical computations;
see the details in Remark~\ref{rem:test}.

\clearpage
\section{Skew Schur polynomials: notation and facts}
\label{sec:Schur}

In this section, we fix some notation
and recall some well-known facts about skew Schur polynomials.
See \cite{Macdonald1995} and \cite{Stanley1999}
for explanations and proofs.

For each $r$ in $\bN_0$,
the $r$-th \emph{elementary symmetric polynomial} $e_r$
is the sum of all products of $r$ distinct variables,
and the $r$-th \emph{complete homogeneous symmetric polynomial} $h_r$ 
is the sum of all monomials of total degree $r$:
\[
e_r(x_1,\ldots,x_w) = \sum_{j_1<j_2<\dots<j_r}\prod_{k=1}^r x_{j_k},\qquad
h_r(x_1,\ldots,x_w) = \sum_{j_1,\ldots,j_r\in\{1,\ldots,w\}} \prod_{k=1}^w x_{j_k}.
\]
When $x_1,\ldots,x_w$ are pairwise different complex numbers,
$h_r(x_1,\ldots,x_w)$ can be computed by the following efficient formula:
\begin{equation}\label{eq:h_through_x}
h_r(x_1,\ldots,x_w)
=\sum_{j=1}^w\frac{x_j^{r+w-1}}{\displaystyle\prod_{k\in\{1,\ldots,w\}\setminus\{j\}}(x_j-x_k)}.
\end{equation}
For brevity, from now on we omit the variables $x_1,\ldots,x_w$ when possible.
Note that $h_0=e_0=1$ and $h_1=e_1$.
Define $h_r$ and $e_r$ to be zero for $r<0$.
The above definitions also imply that $e_r=0$ for $r>w$.
Consider the generating functions for the sequences of polynomials $(e_r)_{r=0}^\infty$ and $(h_r)_{r=0}^\infty$:
\[
E(t) = \sum_{r=0}^w e_r t^r = \prod_{s=1}^w (1+x_s t),\qquad
H(t) = \sum_{r\ge0} h_r t^r = \prod_{s\ge0} (1-x_s t)^{-1}.
\]
The formal series $E(-t)$ and $H(t)$ are mutually reciprocal:
\begin{equation}\label{eq:prod_gen_func}
E(-t) H(t) = 1.
\end{equation}
Equivalently, the sequences $(e_r)_{r=0}^\infty$
and $(h_r)_{r=0}^\infty$ are related by
\begin{equation}\label{eq:e_h_relation}
\sum_{k=0}^j (-1)^k e_k h_{j-k} = \delta_{j,0}\qquad(j\in\bN_0).
\end{equation}

\emph{Integer partitions} (\emph{partitions}, for short)
are tuples of the form
$\la=(\la_1,\la_2,\ldots,\la_m)$,
where $m\in\bN_0$,
$\la_1,\la_2,\ldots,\la_m\in\bZ$,
and $\la_1\ge\la_2\ge\ldots\ge\la_m\ge0$.
Each $\la_j$ is called a \emph{part} of $\la$.
The number of parts is the \emph{length} of $\la$, denoted by $\ell(\la)$;
and the sum of the parts is the \emph{weight} of $\la$, denoted by $|\la|$.
The \emph{Young--Ferrers diagram} of a partition $\la$ may be
formally defined as the set of all points $(j,k)$ in $\bZ^2$ such that $1\le k \le \la_j$.
The \emph{conjugate} of a partition $\lambda$ is the partition
$\lambda'$ whose diagram is the transpose of the diagram $\lambda$.
Hence, $\lambda'_k$ is the number of nodes in the $k$th column of $\lambda$:
\begin{equation}\label{la_conj}
\lambda'_k = \#\{j\colon \lambda_j\ge k\}.
\end{equation}
Here is an example of a partition and its conjugate,
shown as Young--Ferrers diagrams:
\[
\la=(5,3)=\ydiagram{5,3}\;,\qquad
\la'=(2,2,2,1,1)=\ydiagram{2,2,2,1,1}\;.
\]
Given two partitions $\la$ and $\mu$,
it will be written $\mu\subseteq\la$ if $\ell(\mu)\le\ell(\la)$
and $\mu_j\le\la_j$ for all $j$ in $\{1,\ldots,\ell(\mu)\}$.
In this case, the pair $\la,\mu$ is called a \emph{skew partition} and is denoted by $\la/\mu$.
The diagram of $\la/\mu$ is defined as the set difference of the diagrams associated to $\la$ and $\mu$.
For example,
\[
(7,4,2)/(2,1)=\ydiagram{2+5,1+3,2}\;.
\]
There are many equivalent definitions of skew Schur polynomials.
Given a skew partition $\la/\mu$,
the \emph{skew Schur polynomial} $s_{\lambda/\mu}$
can be defined by the (first) \emph{Jacobi--Trudi formula}:
\begin{equation}\label{SS_JT1}
s_{\la/\mu} = \det[h_{\lambda_j-\mu_k-j+k}]_{j,k=1}^{\ell(\la)}.
\end{equation}
The \emph{dual Jacobi--Trudi formula}
(also known as the \emph{second Jacobi--Trudi formula}
or the \emph{N\"{a}gelsbach--Kostka formula})
expresses $s_{\la/\mu}$ in terms of the elementary polynomials:
\begin{equation}\label{SS_JT2}
s_{\la/\mu} = \det[e_{\lambda_j'-\mu_k'-j+k}]_{j,k=1}^{\la_1}.
\end{equation}
In formulas \eqref{SS_JT1} and \eqref{SS_JT2},
the partition $\mu$ is extended with zeros
up to the length of the partition $\la$.
Also notice that the extension of $\la$
with zeros does not change
the corresponding skew Schur polynomial $s_{\la/\mu}$,
though the matrix $JT(\la/\mu)$ changes its size.

We use the notation $\la/\mu$ and the definition
\eqref{SS_JT1} for all pairs of partitions $\la,\mu$,
without requiring $\mu\subseteq\la$,
and denote by $JT(\la/\mu)$ the matrix
$[h_{\lambda_j-\mu_k-j+k}]_{j,k=1}^{\ell(\la)}$
that appears in the right-hand side of the formula \eqref{SS_JT1}.

\begin{prop}\label{prop:not_skew_partition}
For every integer partitions $\la$ and $\mu$ such that $\mu\nsubseteq\la$, 
the determinants in the right-hand sides of \eqref{SS_JT1}
and \eqref{SS_JT2} are zero.
\end{prop}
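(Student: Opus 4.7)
My plan is to exhibit in each Jacobi--Trudi matrix a rectangular block of zeros so large that the matrix cannot have full rank. Since $\mu\nsubseteq\la$, I can pick an index $j$ with $\mu_j>\la_j$, first padding $\la$ with enough trailing zeros to place $j$ inside the matrix if necessary; such padding only appends rows whose entries left of the diagonal are zero and whose diagonal entry is $h_0=1$, so it does not alter the determinant.

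For formula \eqref{SS_JT1}, the decisive step is a one-line estimate: for every $i\ge j$ and $k\le j$, the monotonicities $\la_i\le\la_j$ and $\mu_k\ge\mu_j$ combined with $k-i\le 0$ give
\[
\la_i-\mu_k-i+k\;\le\;(\la_j-\mu_j)+(k-i)\;\le\;-1.
\]
Since $h_r=0$ for $r<0$, the entries $(i,k)$ of $JT(\la/\mu)$ with $i\ge j$ and $k\le j$ all vanish. Denoting by $N$ the size of the matrix, this is a zero submatrix of shape $(N-j+1)\times j$; because $(N-j+1)+j=N+1>N$, the first $j$ columns are supported on at most $j-1$ coordinates and are therefore linearly dependent, whence the determinant vanishes.

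For the dual formula \eqref{SS_JT2}, I would combine the same idea with the conjugation identity $\mu\subseteq\la\Leftrightarrow\mu'\subseteq\la'$: the inequality $\mu_j>\la_j$ places the cell $(j,\la_j+1)$ in the diagram of $\mu$ but not in that of $\la$, which read column-wise gives $\mu'_{\la_j+1}\ge j>\la'_{\la_j+1}$. Rerunning the argument of the previous paragraph with $\la',\mu'$ and the index $\tilde\jmath=\la_j+1$, using $e_r=0$ for $r<0$ in place of $h_r=0$, produces the required zero block in $[e_{\la'_j-\mu'_k-j+k}]$. The only real obstacle is sizing: one must be willing to pad $\la$ (or $\la'$) with enough zeros so that the bad index lies inside the ambient matrix, and beyond this routine bookkeeping the whole proof reduces to the monotonicity inequality displayed above.
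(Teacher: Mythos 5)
Your proposal is correct and follows essentially the same route as the paper's own proof: locate an index $j$ with $\mu_j>\la_j$, observe that monotonicity forces all entries of $JT(\la/\mu)$ with row index $\ge j$ and column index $\le j$ to vanish (since $h_r=0$ for $r<0$), conclude that the first $j$ columns have rank at most $j-1$, and handle the dual formula via the equivalence $\mu\subseteq\la\Leftrightarrow\mu'\subseteq\la'$. The only differences are cosmetic — you spell out the conjugate-index bookkeeping for \eqref{SS_JT2} and the padding step (where the paper tacitly assumes the bad index lies in $\{1,\ldots,\ell(\la)\}$) a little more explicitly.
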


\begin{proof}
Suppose that $p\in\{1,\ldots,\ell(\la)\}$ and $\mu_p>\la_p$.
Then the $(p,p)$th entry of $JT(\la/\mu)$ is zero,
and so are the entries below and to the left of $(p,p)$.
Indeed, if $j\ge p$ and $k\le p$, then
\[
\la_j-\mu_k-j+k \le \la_p - \mu_p < 0
\]
and therefore $JT(\la/\mu)_{j,k}=0$.
Thus, the rank of the matrix formed by the first $p$ columns
of $JT(\la/\mu)$ is strictly less than $p$,
the matrix $JT(\la/\mu)$ is singular,
and $\det(JT(\la/\mu))=0$.
Another way to obtain the same conclusion
is to divide the rows and columns of $JT(\la/\mu)$
into the parts $\{1,\dots,p\}$ and $\{p+1,\dots,w\}$
and to apply the formula for the determinant of block-triangular matrices.

The situation with the determinant in \eqref{SS_JT2}
is similar; let us just prove that
$\mu\subseteq\la$ if and only if $\mu'\subseteq\la'$.
Since the operation $\la\mapsto\la'$ is involutive,
it is sufficient to verify the \emph{if} part.
Suppose that $\mu_k\le\la_k$ for each $k$.
Then for each $j$ we obtain
$\{k\colon\ \mu_k\ge j\} \subseteq \{k\colon\ \la_k\ge j\}$,
thus, $\mu_j'\le\la_j'$ by \eqref{la_conj}.
\end{proof}

\medskip
In particular, if $\mu$ is the void partition $()$,
then the skew Schur polynomial $s_{\la/\mu}$ is called the \emph{Schur polynomial}
associated to $\la$ and is denoted by $s_\la$.
In this case, the Jacobi--Trudi formula~\eqref{SS_JT1} 
and its dual \eqref{SS_JT2} become
\begin{align}
\label{eq:JT1_Schur}
s_\la &= \det [h_{\la_j-j+k}]_{j,k=1}^{\ell(\la)}, \\
\label{eq:JT2_Schur}
s_\la &= \det [e_{\la_j'-j+k}]_{j,k=1}^{\la_1}.
\end{align}

\begin{rem}\label{rule:partition_subindex_h}
The parts of $\la$ are the degrees of the complete homogeneous polynomials in the main diagonal of $JT(\la)$, i.e. $JT(\la)_{j,j}=h_{\la_j}$.
\end{rem}

Schur polynomials can also be expressed as quotients of antisymmetric functions.
Notice first that if $\ell(\la)>w$, then \eqref{eq:JT2_Schur} yields
$s_\la(x_1,\ldots,x_w)=0$.
If $\ell(\la)\le w$, then the partition $\la$
can be extended with zeros up to the length $w$, and
\begin{equation}\label{eq:Schur_quotient}
s_\la(x_1,\ldots,x_w)
=\frac{\det A_\la(x_1,\ldots,x_w)}{\det A_{(0^w)}(x_1,\ldots,x_w)},
\end{equation}
where $A_\la(x_1,\ldots,x_w)$ is the generalized Vandermonde matrix
\begin{equation}\label{eq:generalized_Vandermonde}
A_\la(x_1,\ldots,x_w)
=\bigl[ x_k^{\la_j+w-j} \bigr]_{j,k=1}^w
=\matr{cccc}{
\medstrut x_1^{\la_1+w-1} & x_2^{\la_1+w-1} & \cdots & x_w^{\la_1+w-1}\\
\medstrut x_1^{\la_2+w-2} & x_2^{\la_2+w-2} & \cdots & x_w^{\la_2+w-2}\\
\medstrut \vdots & \vdots & \ddots & \vdots \\
\medstrut x_1^{\la_w} & x_2^{\la_w} & \cdots & x_w^{\la_w}
}.
\end{equation}
The denominator of the quotient in \eqref{eq:Schur_quotient}
is the Vandermonde determinant
\[
V(x_1,\ldots,x_w)
=\det A_{(0^w)}(x_1,\ldots,x_w)
=\prod_{1\leq j<k\leq w}(x_j-x_k).
\]

\begin{rem}\label{rem:confluent}
If some of the numbers $x_1,\ldots,x_w$ coincide,
then the quotient \eqref{eq:Schur_quotient} can be computed by the 
L'H\^{o}pital's rule, i.e.\ by differentiating the columns of the determinants
that correspond to the repeating variables.
Following Trench \cite[Definition~1]{Trench1985inv}
and modifying slightly his notation,
we denote by $C_\la$ the vector-function $t\mapsto [t^{\la_j+w-j}]_{j=1}^w$,
and by $C_\la^{(q)}$ the $q$-th derivative of this function.
Suppose that the list $x_1,\ldots,x_w$ contains $\ga$ different
complex numbers, and $m_1,\ldots,m_\ga$ are their multiplicities,
such that $m_1+\dots+m_\ga=w$,
$z_1=\dots=z_{m_1}$, $z_{m_1+1}=\dots=z_{m_1+m_2}$, and so forth.
Denote by $A_\la(x_1,\ldots,x_w)$
the \emph{confluent generalized Vandermonde matrix} of order $w$,
whose first $m_1$ columns are $C^{(0)}(x_1),\dots,C^{(m_1-1)}(x_1)$,
the next $m_2$ columns are $C^{(0)}(x_{m_1+1}),\dots,C^{(m_2-1)}(x_{m_1+1})$, etc.
Then $V=\det A_{(0^w)}(x_1,\ldots,x_w)$ is the confluent Vandermonde determinant.
With these modifications, formula \eqref{eq:Schur_quotient} is valid
in the case when some of $x_1,\ldots,x_w$ coincide.
\end{rem}

Schur polynomials form a basis for the vector space
of homogeneous symmetric polynomials.
In particular, the skew Schur polynomials can be expressed
as sums of Schur polynomials. 
This is known as the Littlewood--Richardson rule.

In this paper we only use a particular case of the Littlewood--Richardson rule,
known as the \emph{skew version of Pieri's rule}.
Let $\la$ be a partition and $r\in\bN$.
Then
\begin{equation}\label{eq:skew_Pieri}
s_{\la/(r)} = \sum_\nu s_\nu,
\end{equation}
where $\nu$ ranges over all partitions $\nu\subseteq\la$
such that $\la/\nu$ is a horizontal strip of size $r$.
It is said that a skew partition is a \emph{horizontal strip}
if in the associated diagram there is no two nodes in the same column.
Formally, $\nu$ from \eqref{eq:skew_Pieri} satisfies
\begin{equation}\label{eq:formal_skew_Pieri}
\ell(\nu)\le\ell(\la),\quad
\la_{j+1}\le\nu_j\le\la_j\ (1\le j\le\ell(\la))
\quad\text{and}\quad
|\la|-|\nu|=r.
\end{equation}

The \emph{flip operation} over skew partitions is defined by
\begin{equation}\label{eq:flip}
(\la/\mu)^\ast = (\la_1^{\ell(\la)}-\reverse{\mu})/(\la_1^{\ell(\la)}-\reverse{\la}),
\end{equation}
i.e. $(\la/\mu)^\ast=\al/\be$, where
\begin{equation}\label{eq:flip_al_bet}
\al_j = \la_1-\mu_{{\ell(\la)}+1-j}, \qquad \be_j = \la_1-\la_{{\ell(\la)}+1-j}.
\end{equation}
Here is an example of the action of the flip operation:
\begin{align*}
\la/\mu&=(6,6,3,1)/(3,2,2)=\ydiagram{3+3,2+4,2+1,1}\;,\\
(\la/\mu)^\ast&=\al/\beta=(6,4,4,3)/(5,3)=\ydiagram{5+1,3+1,4,3}\;.
\end{align*}

It is well known that $s_{(\la/\mu)^\ast}=s_{\la/\mu}$;
see, for example, \cite[Exercise, 7.56(a)]{Stanley1999}.
For the reader's convenience, we give another simple proof of this fact
in the following proposition by applying the Jacobi--Trudi formula
and the concept of the pertranspose matrix.
Recall that the \emph{exchange matrix} of order $n$ is defined by
$J_n=[\delta_{j+k,n+1}]_{j,k=1}^n$,
where $\delta$ is Kronecker's delta.
Given a matrix $A \in \bC^{n\times n}$,
the matrix $J_n A^\top J_n$ is called the \emph{pertranspose} of $A$;
its entry $(j,k)$ equals $A_{n+1-k,n+1-j}$.
The matrices $J_n A^\top J_n$ and $A$
have the same determinant because
$\det(J_n)=(-1)^{n(n-1)/2}$.


\begin{prop}\label{prop:skew_Schur_flip}
Let $\la$ and $\mu$ be some partitions. Then
\begin{equation}\label{eq:skew_Schur_flip}
s_{\la/\mu} = s_{(\la/\mu)^\ast}.
\end{equation}
\end{prop}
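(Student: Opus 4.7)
The approach is exactly as hinted in the paragraph preceding the proposition: apply the Jacobi--Trudi formula \eqref{SS_JT1} to both sides and verify that the two resulting matrices are pertransposes of each other. Since the pertranspose preserves determinants, \eqref{eq:skew_Schur_flip} follows at once.

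Set $n=\ell(\la)$ and extend $\mu$ by trailing zeros to length $n$, so that $A:=JT(\la/\mu)$ is the $n\times n$ matrix with entries $A_{i,l}=h_{\la_i-\mu_l-i+l}$. By the description of the pertranspose recalled just before the statement,
\[
(J_n A^{\top} J_n)_{j,k}=A_{n+1-k,\,n+1-j}
=h_{\la_{n+1-k}-\mu_{n+1-j}+k-j},
\]
using $-(n+1-k)+(n+1-j)=k-j$. On the other hand, writing $(\la/\mu)^{\ast}=\al/\be$ as in \eqref{eq:flip_al_bet}, the key cancellation
\[
\al_j-\be_k=(\la_1-\mu_{n+1-j})-(\la_1-\la_{n+1-k})=\la_{n+1-k}-\mu_{n+1-j}
\]
gives
\[
JT(\al/\be)_{j,k}=h_{\al_j-\be_k-j+k}=h_{\la_{n+1-k}-\mu_{n+1-j}+k-j},
\]
which matches the pertranspose entry by entry. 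Taking determinants then yields $s_{\la/\mu}=\det A=\det(J_n A^{\top} J_n)=\det JT(\al/\be)=s_{(\la/\mu)^{\ast}}$.

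The only delicate point is matrix-size bookkeeping: if $\mu_1=\la_1$, then $\al$ has length strictly smaller than $n$ and the ``natural'' Jacobi--Trudi matrix for $\al/\be$ is smaller than $A$. This is handled by the convention, stated right after \eqref{SS_JT2}, that padding the outer partition with zeros does not change the skew Schur polynomial; it lets us work at the common size $n\times n$, and the entry-wise identification above then holds verbatim. The case $\mu\nsubseteq\la$ requires no extra care either: the pertranspose identity between the two Jacobi--Trudi matrices holds regardless of whether the pairs $(\la,\mu)$ and $(\al,\be)$ are genuine skew partitions, and Proposition~\ref{prop:not_skew_partition} guarantees that both determinants vanish in that case. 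The main ``work'' is thus the one-line algebraic check $-(n+1-k)+(n+1-j)=k-j$ together with the $\la_1$ cancellation, and I expect no real obstacle beyond keeping indices straight.
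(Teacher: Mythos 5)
Your proof is correct and follows exactly the paper's own argument: both compute that $JT((\la/\mu)^\ast)$ is the pertranspose of $JT(\la/\mu)$ via the cancellation $\al_j-\be_k=\la_{n+1-k}-\mu_{n+1-j}$ and the index identity $-(n+1-k)+(n+1-j)=k-j$, then take determinants. Your extra remarks on zero-padding and the $\mu\nsubseteq\la$ case match the conventions the paper states around the proposition.
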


\begin{proof}
Denote the length of $\la$ by $n$.
The following computation shows that
the matrix $JT((\la/\mu)^\ast)$ is the pertranspose of $JT(\la/\mu)$:
\begin{align*}
JT((\la/\mu)^\ast)_{j,k}
&=h_{\la_{n+1-k}-\mu_{n+1-j}-j+k}
\\
&=h_{\la_{n+1-k}-\mu_{n+1-j}-(n+1-k)+(n+1-j)}
=JT(\la/\mu)_{n+1-k,n+1-j}.
\end{align*}
Thus, the determinants of $JT((\la/\mu)^\ast)$ and $JT(\la/\mu)$ coincide.
\end{proof}

If the pair $\la,\mu$ does not form a skew partition,
i.e.\ $\la_j<\mu_j$ for some $j$, then the pair $\al,\be$
defined by \eqref{eq:flip_al_bet} have the same defect,
hence in this case both sides of \eqref{eq:skew_Schur_flip} are zero.

\begin{rem}\label{rem:skew_Schur_functions}
Many formulas for skew Schur polynomials
do not involve explicitly the variables $x_1,\ldots,x_w$;
the corresponding abstraction leads to the concept of \emph{skew Schur functions}.
In this paper, it is convenient to think that the argument of a skew Schur function is not a list of variables $x_1,x_2,\ldots$,
but rather an arbitrary sequence of numbers $(e_k)_{k=0}^\infty$ or,
equivalently, the formal series $E(t)=\sum_{k=0}^\infty e_k t^k$. 
Then the numbers $h_0,h_1,\ldots$ are defined as the coefficients of the formal series $H(t)=1/E(-t)$, 
i.e.\ by formula \eqref{eq:e_h_relation},
and the skew Schur functions
are defined by Jacobi--Trudi formula \eqref{SS_JT1}.
This idea is explained in \cite[pp.~99--100 and Chap.~VII]{Littlewood1950}
and \cite[Exercise~7.91]{Stanley1999}.
Note that \eqref{eq:Schur_quotient} makes sense only for Schur polynomials.
\end{rem}

\section{Minors of banded Toeplitz matrices}
\label{sec:main}

In this section we prove Theorem~\ref{thm:main}.
Let $a$ be a Laurent polynomial of the form \eqref{eq:Laurent}.
Consider the polynomial
\[
P(t)=t^{w-p} a(t)
=\sum_{j=0}^w a_{j+w-p} t^j
=a_p \prod_{j=1}^w (t-z_j).
\]
By Vieta's formulas, the coefficients of this polynomial
can be related with elementary symmetric polynomials
$a_{j+p-w}=a_p (-1)^{w-j} e_{w-j} t^j$, which yields
\begin{equation}\label{eq:a_through_e}
a_j=(-1)^{p-j}a_p e_{p-j}.
\end{equation}
Note that \eqref{eq:a_through_e} is true not only for $p-w\le j\le p$,
but for every integer $j$, 
since for $j>p$ or $j<p-w$ both sides equal zero.
Thus the matrix $T_n(a)$ can be written as
\begin{equation}\label{eq:Tmatrix_through_e}
T_n(a)=(-1)^p\,a_p\,[(-1)^{j+k} e_{p+k-j}]_{j,k=1}^n.
\end{equation}

\begin{rem}[Toeplitz matrices generated by semi-infinite Laurent series]
\label{rem:Laurent_series}
Instead of working with Laurent polynomials of the form
\eqref{eq:Laurent} and corresponding banded Toeplitz matrices,
we consider the more general situation in which
$a$ is a formal Laurent series of the form
$a(t)=\sum_{k=-\infty}^p a_k t^k$,
with $a_p\ne0$.
In other words, the initial data is the sequence
$(a_k)_{k\in\bZ}$ such that $a_p\ne0$ and $a_k=0$ for $k>p$.
Then the corresponding Toeplitz matrices have only a finite number
of nonzero diagonals below the leading diagonal.
We do not know how to define the zeros of a formal Laurent series
nor how to express the coefficients $a_k$ through these zeros.
Thus we define $(e_k)_{k=0}^\infty$ directly by \eqref{eq:a_through_e}
and work with skew Schur functions, see Remark~\ref{rem:skew_Schur_functions}.
Note that $a$ can be written in terms of $(e_k)_{k=0}^\infty$ as
\begin{equation}\label{eq:aseries_through_e}
a(t)
=\sum_{k=-\infty}^p a_k t^k
=a_p t^p \sum_{k=-\infty}^0 (-1)^k e_{-k}t^k
=a_p t^p E(-1/t).
\end{equation}
Almost all results of this paper are valid in this situation,
except for \eqref{eq:eigvec_Trench}.
\end{rem}


Consider first the triangular case studied by other authors,
in order to facilitate the comparison of the results.
Let
\begin{equation}\label{eq:bseries_through_e}
b(t)=E(1/t)=\sum_{k=0}^\infty e_k t^{-k}
=\sum_{k=-\infty}^0 e_{-k} t^k.
\end{equation}
If $e_0,e_1,\ldots$ are elementary symmetric polynomials in a finite number of variables,
$z_1,\ldots,z_w$, then the sums in \eqref{eq:bseries_through_e} are finite,
and $-z_1,\ldots,-z_w$ are the roots of $b$.

The next lemma is a simple application of the dual Jacobi--Trudi formula.

\begin{lem}\label{lem:triangular_minor_by_JT1}
Let $n\ge 1$, $m\le n$, $\rho,\si\in I^m_n$, and $d=n-m$.
Then
\begin{equation}\label{eq:minor_triangular_rhosi}
\det(T_n(b)_{\rho,\si})
=\det \bigl[ e_{\si_k-\rho_j} \bigr]_{j,k=1}^m
=s_{(\id_m-\rho+d^m)'/(\id_m-\si+d^m)'}.
\end{equation}
\end{lem}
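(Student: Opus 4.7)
The first equality is immediate from the definition of $b$: since $b_{-k} = e_k$ for $k \ge 0$ and $b_k = 0$ for $k > 0$, the $(j,k)$ entry of $T_n(b)_{\rho,\si}$ is $b_{\rho_j - \si_k} = e_{\si_k - \rho_j}$.

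For the second equality, my strategy is to recognize $\det\bigl[e_{\si_k - \rho_j}\bigr]_{j,k=1}^m$ as the dual Jacobi--Trudi determinant \eqref{SS_JT2} for the skew partition $\la/\mu$ with $\la := (\id_m - \rho + d^m)'$ and $\mu := (\id_m - \si + d^m)'$. First I would check that $\id_m - \rho + d^m$ and $\id_m - \si + d^m$ are genuine partitions so that their conjugates make sense: weak monotonicity reduces to $(d+j-\rho_j) - (d+(j+1)-\rho_{j+1}) = \rho_{j+1} - \rho_j - 1 \ge 0$, which holds because $\rho \in I^m_n$; nonnegativity $d + j - \rho_j \ge 0$ follows from the sharp bound $\rho_j \le n - (m - j) = d + j$ forced by $\rho_j < \rho_{j+1} < \cdots < \rho_m \le n$. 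Hence $\la$ and $\mu$ are partitions with parts at most $m$ and length at most $d$, and by double conjugation $\la'_j = d + j - \rho_j$, $\mu'_k = d + k - \si_k$ for $j, k \in \{1, \ldots, m\}$.

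With these identifications in hand, the key computation is the substitution
\[
\la'_j - \mu'_k - j + k = (d+j-\rho_j) - (d+k-\si_k) - j + k = \si_k - \rho_j.
\]
Applying \eqref{SS_JT2} with the matrix enlarged (harmlessly) to size $m \ge \la_1$ — the added rows and columns contribute an identity block in the lower right corner, leaving the determinant unchanged — yields
\[
s_{\la/\mu} = \det\bigl[e_{\la'_j - \mu'_k - j + k}\bigr]_{j,k=1}^m = \det\bigl[e_{\si_k - \rho_j}\bigr]_{j,k=1}^m,
\]
which is the desired identity. The degenerate case $\mu \nsubseteq \la$ is handled separately: the right side vanishes by Proposition~\ref{prop:not_skew_partition}, while the left side vanishes by the same block-triangular rank argument applied directly to $[e_{\si_k - \rho_j}]$. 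The only real obstacle is bookkeeping — verifying that $\id_m - \rho + d^m$ is a partition and lining up the indices — so no deeper machinery is needed beyond the dual Jacobi--Trudi formula.
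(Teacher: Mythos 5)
Your proof is correct and follows essentially the same route as the paper: both identify the entry $e_{\si_k-\rho_j}$ with $e_{\la'_j-\mu'_k-j+k}$ via $\la'_j=d+j-\rho_j$, $\mu'_k=d+k-\si_k$ and invoke the dual Jacobi--Trudi formula \eqref{SS_JT2}. You simply spell out more of the bookkeeping (that $\id_m-\rho+d^m$ is a partition, the harmless enlargement of the determinant to size $m$, and the degenerate case) than the paper does.
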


\begin{proof}
The $(j,k)$th entry of the matrix
$T_n(b)_{\rho,\si}$ is $e_{\si_k-\rho_j}$.
Rewrite the difference $\si_k-\rho_j$ as
\[
\si_k-\rho_j
=(j-\rho_j+d)-(k-\si_k+d)-j+k,
\]
and define partitions $\la$ and $\mu$ via their conjugates:
\begin{equation}\label{eq:lamuprime_triangular_expanded}
\la'_j=j-\rho_j+d,\qquad
\mu'_k=k-\si_k+d.
\end{equation}
Then $T_n(b)_{\rho,\si}$ coincides with the matrix
from the right-hand side of \eqref{SS_JT2}.
\end{proof}

The summand $d$ in \eqref{eq:lamuprime_triangular_expanded}
serves just to make $\la'_j$ and $\mu'_k$ non-negative. 
This summand can be substituted by any integer greater
or equal to $\max\{\rho_m-m,\si_m-m\}$.

Lemma~\ref{lem:triangular_minor_by_JT1}
is very close to the Subsection~4.2 from \cite{ReinerShawWilligenburg2007},
but the Toeplitz matrix considered there is
$[h_{k-j}]_{j,k=0}^\infty$,
and the summand $\si_m-m$ is used instead of $d$.
See also the Section~1.4 from \cite{Lascoux2003},
where the skew Schur functions are defined as certain minors of the matrix
$[h_{k-j}]_{j,k=0}^\infty$, and the parts of the partitions
are written in the increasing order.

If the number of the selected rows and columns
(which we denote by $m$) is small,
then the corresponding Toeplitz minor is easy to compute directly.
The interesting case is when $d=n-m$ is small,
i.e.\ when we strike out only few rows and columns.
We are going to express $\la$ and $\mu$
in terms of the deleted rows and columns.
The following elementary combinatorial lemma yields the
ascending enumeration of a subset of $\{1,\ldots,n\}$
in terms of its complement.

\begin{lem}\label{lem:complement}
Let $m,n\in\bN$, $m\le n$, $d=n-m$,
$\rho\in I^m_n$ and $\xi\in I^d_n$ such that $\xi=\{1,\ldots,n\}\setminus \rho$.
Then for every $j$ in $\{1,\dots,d\}$
\begin{equation}\label{eq:complement}
\xi_j = j + \#\{k\in\{1,\dots,m\}\colon\ \rho_k-k < j\}.
\end{equation}
\end{lem}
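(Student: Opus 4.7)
My plan is to give a direct combinatorial proof by double counting, with no real machinery. The key bookkeeping identity I will establish first is the following symmetric pair: for each $k\in\{1,\ldots,m\}$,
\[
\#\{l\in\{1,\ldots,d\}\colon\ \xi_l<\rho_k\}=\rho_k-k,
\]
and for each $j\in\{1,\ldots,d\}$,
\[
\#\{k\in\{1,\ldots,m\}\colon\ \rho_k<\xi_j\}=\xi_j-j.
\]
Both follow from the same elementary observation: among the $\rho_k$ integers in $\{1,\ldots,\rho_k\}$, exactly $k$ belong to $\rho$ (namely $\rho_1,\ldots,\rho_k$), so the remaining $\rho_k-k$ belong to $\xi$, and they are automatically strictly less than $\rho_k$ because $\rho_k\notin\xi$. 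The analogous count on $\{1,\ldots,\xi_j\}$ gives the second identity.

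Next, I will rewrite the predicate $\rho_k-k<j$ as a statement relating $\rho_k$ to $\xi_j$. Using the first identity and the fact that $\xi_1<\xi_2<\cdots<\xi_d$, the inequality $\rho_k-k<j$ is equivalent to the condition that fewer than $j$ of the $\xi_l$ lie strictly below $\rho_k$, which in turn is equivalent to $\xi_j\ge\rho_k$. Since $\rho$ and $\xi$ are disjoint, $\xi_j\ne\rho_k$, so this simplifies to $\rho_k<\xi_j$. Applying the second identity from the previous paragraph then gives
\[
\#\{k\colon\ \rho_k-k<j\}=\#\{k\colon\ \rho_k<\xi_j\}=\xi_j-j,
\]
and rearranging yields \eqref{eq:complement}.

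There is no serious obstacle: the entire argument is a bookkeeping exercise inside $\{1,\ldots,n\}=\rho\sqcup\xi$. The only point requiring mild care is the conversion between strict and weak inequalities, and this is automatic from the disjointness of $\rho$ and $\xi$. A picture of the staircase path separating $\rho$ from $\xi$ in the $n\times n$ grid makes both identities visually obvious, but I will prefer the one-line counting argument above for the written proof.
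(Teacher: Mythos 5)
Your proof is correct and is essentially the same argument as the paper's: the two counting identities you establish are exactly the paper's $\xi_j=j+\#A_j$ and $\rho_k=k+\#B_k$ with $A_j=\{k\colon\rho_k<\xi_j\}$ and $B_k=\{l\colon\xi_l<\rho_k\}$, and your equivalence $\rho_k-k<j\Leftrightarrow\rho_k<\xi_j$ is the paper's chain $\rho_k<\xi_j\Leftrightarrow j\notin B_k\Leftrightarrow\#B_k<j\Leftrightarrow\rho_k-k<j$. No gaps; the handling of strict versus weak inequalities via disjointness of $\rho$ and $\xi$ is exactly the point that needs care, and you address it.
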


\begin{proof}
Put $A_j=\{k\in\{1,\dots,m\}\colon\ \rho_k<\xi_j\}$
and $B_k=\{j\in\{1,\dots,d\}\colon\ \xi_j<\rho_k\}$.
The set $A_j$ is the complement of $\{\xi_1,\ldots,\xi_j\}$
in $\{1,\ldots,\xi_j\}$, therefore $\xi_j=j+\#A_j$.
Similarly, $\rho_k=k+\#B_k$.
Furthermore,
\[
\rho_k<\xi_j
\quad\Longleftrightarrow\quad
j\notin B_k
\quad\Longleftrightarrow\quad
\#B_k<j
\quad\Longleftrightarrow\quad
\rho_k-k<j,
\]
and $\xi_j=j+\#A=j+\#\{k\colon\ \rho_k-k<j\}$.
\end{proof}

To illustrate Lemma~\ref{lem:complement},
take $n=10$ and $\rho=(1,3,4,6,9,10)$, then $\xi=(2,5,7,8)$ and
\[
\xi_3
=3+\#\{k\colon\ \rho_k-k<3\}
=3+\#\{1,2,3,4\}=7.
\]

By comparing formulas \eqref{eq:complement} and \eqref{la_conj}
we see that the duality between the sets and their complements
is somehow similar to the duality between the partitions and their conjugates.
This is the key idea behind the proof of the following lemma.


\begin{lem}\label{lem:triangular_minor_xi_eta}
In conditions of Lemma~\ref{lem:triangular_minor_by_JT1},
denote by $\xi$ and $\eta$ the complements of $\rho$ and $\si$,
with respect to the set $\{1,\ldots,n\}$.
Then
\begin{equation}\label{eq:minor_triangular_xieta}
\det(T(b)_{\rho,\si})
=s_{\reverse{\xi-\id_d}/\reverse{\eta-\id_d}}
\end{equation}
and
\begin{equation}\label{eq:Alexandersson_triangular_xieta}
\det(T(b)_{\rho,\si})
=s_{(m^d+\id_d-\eta)/(m^d+\id_d-\xi)}.
\end{equation}
\end{lem}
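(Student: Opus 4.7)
The plan is to read both identities off Lemma~\ref{lem:triangular_minor_by_JT1}, which already expresses the minor as $s_{A'/B'}$ with $A'_j=j-\rho_j+d$ and $B'_j=j-\si_j+d$. To prove \eqref{eq:minor_triangular_xieta}, I would compute the conjugate partition $A$ directly:
\[
A_k = \#\{j\in\{1,\ldots,m\}\colon\ j-\rho_j+d\ge k\}
    = \#\{j\colon\ \rho_j-j\le d-k\}.
\]
Setting $i=d-k+1$ in Lemma~\ref{lem:complement} identifies this count with $\xi_{d-k+1}-(d-k+1)$, which is precisely the $k$-th entry of $\reverse{\xi-\id_d}$. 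The same argument with $\eta$ in place of $\xi$ yields $B=\reverse{\eta-\id_d}$, and \eqref{eq:minor_triangular_xieta} follows.

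To derive \eqref{eq:Alexandersson_triangular_xieta} from \eqref{eq:minor_triangular_xieta}, I would pass from $\la:=\reverse{\xi-\id_d}$, $\mu:=\reverse{\eta-\id_d}$ to the claimed form via a flip inside the enclosing rectangle $m^d$. A short check based on the symmetric counterpart of the bound used above, namely $\xi_k\le m+k$, shows that $\la,\mu\subseteq m^d$. Since $\reverse{\la}=\xi-\id_d$ and $\reverse{\mu}=\eta-\id_d$, rotating $\la/\mu$ by $180$ degrees inside this rectangle produces $(m^d-\reverse{\mu})/(m^d-\reverse{\la})=(m^d+\id_d-\eta)/(m^d+\id_d-\xi)$, so the equality of the two skew Schur polynomials is a reformulation of Proposition~\ref{prop:skew_Schur_flip}.

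The main obstacle I anticipate is that the paper's flip convention uses the minimal bounding rectangle $\la_1^{\ell(\la)}$, whereas here $\la_1=\xi_d-d$ may be strictly less than $m$ and $\ell(\la)$ strictly less than $d$. To bypass this mismatch, I would verify the required identity directly at the level of Jacobi--Trudi matrices, in the spirit of the proof of Proposition~\ref{prop:skew_Schur_flip}. Padding $\la$ and $\mu$ to length $d$ with zeros, a brief computation collapses the $d\times d$ JT matrix of $(m^d+\id_d-\eta)/(m^d+\id_d-\xi)$ to $[h_{\xi_k-\eta_j}]_{j,k=1}^d$, and the JT matrix of $\la/\mu$ to $[h_{\xi_{d+1-j}-\eta_{d+1-k}}]_{j,k=1}^d$. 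These two matrices are pertransposes of each other, hence share a determinant, which yields \eqref{eq:Alexandersson_triangular_xieta}.
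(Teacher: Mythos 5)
Your proof is correct and follows essentially the same route as the paper: the first identity is obtained exactly as in the paper's proof, by conjugating $\la'_j=j-\rho_j+d$ and identifying the resulting counts with $\xi_{d+1-j}-(d+1-j)$ via Lemma~\ref{lem:complement}. For the second identity the paper invokes Proposition~\ref{prop:skew_Schur_flip} and then adds $m-\la_1$ to all parts, whereas you inline the pertranspose computation on the Jacobi--Trudi matrices directly inside the rectangle $m^d$; this is the same mechanism and, if anything, handles the bounding-rectangle normalization more explicitly.
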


\begin{proof}
Let us compute $\la_j$ by applying \eqref{la_conj},
\eqref{eq:lamuprime_triangular_expanded} and \eqref{eq:complement}.
For every $j\in\{1,\ldots,d\}$,
\begin{align*}
\la_j
&= \#\{k\colon\ \la_k'\geq j\}
= \#\{k\colon\ k - \rho_k + d \geq j\}
\\[0.5ex]
&= \#\{k\colon\ \rho_k - k \le d - j\}
= \#\{k\colon\ \rho_k - k < d + 1 - j\}
\\[0.5ex]
&= \xi_{d+1-j}-(d+1-j)
= (\xi-\id_d)_{d+1-j}
= \reverse{\xi-\id_d}_j.
\end{align*}
Thereby we obtain $\la=\reverse{\xi-\id_d}$.
The formula $\mu=\reverse{\eta-\id_d}$ can be proved in a similar manner.
As we know from Proposition~\ref{prop:skew_Schur_flip},
$s_{\la/\mu}=s_{\al/\be}$, where $\al/\be=(\la/\mu)^\ast$.
We compute $\al$ and $\be$ by \eqref{eq:flip}:
\[
\al=\la_1^{\ell(\la)} - \reverse{\mu}
=\la_1^{\ell(\la)} + \id_d - \eta,\qquad
\be=\la_1^{\ell(\la)} - \reverse{\la}
=\la_1^{\ell(\la)} + \id_d - \xi.
\]
Note that $\la_1=\xi_d-d\le n-d=m$.
The partitions $m^d+\id_d-\eta$ and $m^d+\id_d-\xi$
are obtained from $\al$ and $\be$
by adding the same non-negative integer number
$m-\la_1$ to all parts.
This operation does not change the skew Schur function,
thus \eqref{eq:Alexandersson_triangular_xieta} is proved.
\end{proof}

Formula~\eqref{eq:Alexandersson_triangular_xieta}
is the original Alexandersson's formula (3) from \cite{A2012}.
The statements and proofs of
our Lemmas~\ref{lem:triangular_minor_by_JT1}--\ref{lem:triangular_minor_xi_eta}
may be viewed as another redaction of the proof
of Proposition~3 in \cite{A2012}.

In what follows we use extensively the obvious identity
\begin{equation}\label{eq:id_sum}
\id_{d+p}=(\id_p,p^d+\id_d)=(\id_d,d^p+\id_p).
\end{equation}

The next lemma shows how to transform a general minor of $T_n(a)$
to a certain minor of the triangular matrix $T_{n+p}(b)$.
As usual, in the notation for the submatrices
we have to indicate the selected rows and columns,
but the real ``characters of the tale'' are the struck-out rows and columns.

\begin{lem}\label{lem:from_minor_Ta_to_minor_Tb}
Let $a$ and $b$ be Laurent series of the forms~\eqref{eq:aseries_through_e}
and \eqref{eq:bseries_through_e}, respectively.
Furthermore, let $n,m\in\bZ$, $1\le m\le n$, $\rho,\si\in I^m_n$,
$\xi=\{1,\ldots,n\}\setminus\rho$, $\eta=\{1,\ldots,n\}\setminus\si$.
Then
\begin{equation}\label{eq:minor_Ta_to_Tb}
\det T_n(a)_{\rho,\si}
=(-1)^{|\rho|+|\si|+mp} a_p^m
\det T_{n+p}(b)_{\widehat{\rho},\widehat{\si}},
\end{equation}
where 
$\widehat{\xi}=(\xi,\id_p+n^p)$,
$\widehat{\eta}=(\id_p,\eta+p^d)$,
$\widehat{\rho}=\{1,\dots,n+p\}\setminus\widehat{\xi}$,
and
$\widehat{\si}=\{1,\ldots,n+p\}\setminus\widehat{\eta}$.
\end{lem}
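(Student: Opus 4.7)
The plan is a direct computation: unfold both sides and match them entry by entry, using the relation \eqref{eq:Tmatrix_through_e} that links the coefficients of $a$ with the polynomials $e_k$.

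First I would start from \eqref{eq:Tmatrix_through_e}, which gives $T_n(a)_{j,k}=(-1)^p a_p\,(-1)^{j+k}e_{p+k-j}$. Restricting to the rows $\rho_1,\ldots,\rho_m$ and the columns $\si_1,\ldots,\si_m$, taking determinants, and pulling out the common factor $(-1)^p a_p$ from each of the $m$ rows as well as the signs $(-1)^{\rho_j}$ from the rows and $(-1)^{\si_k}$ from the columns, I get
\[
\det T_n(a)_{\rho,\si}
=(-1)^{pm+|\rho|+|\si|}\,a_p^m\,
\det\bigl[e_{p+\si_k-\rho_j}\bigr]_{j,k=1}^m.
\]
This already exposes the prefactor in \eqref{eq:minor_Ta_to_Tb}, so it only remains to identify the trailing determinant with $\det T_{n+p}(b)_{\widehat{\rho},\widehat{\si}}$.

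Next I would analyze $\widehat{\rho}$ and $\widehat{\si}$. From $\widehat{\xi}=(\xi,\id_p+n^p)$ we have $\widehat{\xi}\cap\{1,\ldots,n\}=\xi$ and $\widehat{\xi}\supseteq\{n+1,\ldots,n+p\}$, so $\widehat{\rho}=\{1,\ldots,n+p\}\setminus\widehat{\xi}=\{1,\ldots,n\}\setminus\xi=\rho$. Similarly, from $\widehat{\eta}=(\id_p,\eta+p^d)$ the complement is $\widehat{\si}=\{1,\ldots,n+p\}\setminus\widehat{\eta}=\{p+k\colon k\in\{1,\ldots,n\}\setminus\eta\}=\si+p^m$, i.e.\ $\widehat{\si}_k=\si_k+p$. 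Since $T_{n+p}(b)$ has entries $b_{j-k}=e_{k-j}$ (upper triangular), the submatrix $T_{n+p}(b)_{\widehat{\rho},\widehat{\si}}$ has $(j,k)$-entry $e_{(\si_k+p)-\rho_j}=e_{p+\si_k-\rho_j}$, which is exactly the matrix whose determinant appears above.

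Combining the two calculations gives the claimed identity. The proof is really just bookkeeping, so no step is a genuine obstacle; the mildly tricky point is to check that the seemingly asymmetric definitions of $\widehat{\xi}$ (appending $\id_p+n^p$) and $\widehat{\eta}$ (prepending $\id_p$) are exactly what is needed to make $\widehat{\rho}=\rho$ stay in place while $\widehat{\si}=\si+p^m$ shifts right by $p$, which is what converts the Toeplitz shift $p+\si_k-\rho_j$ into a genuine upper-triangular entry $(\widehat{\si}_k-\widehat{\rho}_j)$ of $T_{n+p}(b)$.
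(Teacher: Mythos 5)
Your proof is correct and follows essentially the same route as the paper: both reduce the minor to $\det[e_{p+\si_k-\rho_j}]_{j,k=1}^m$ by factoring the signs $(-1)^{\rho_j}$ and $(-1)^{\si_k}$ out of the rows and columns, and both identify this with a minor of the upper triangular matrix $T_{n+p}(b)$ via $\widehat{\rho}=\rho$ and $\widehat{\si}=\si+p^m$. The only cosmetic difference is the direction of the bookkeeping (you derive $\widehat{\rho},\widehat{\si}$ from the stated $\widehat{\xi},\widehat{\eta}$, while the paper defines $\widehat{\rho},\widehat{\si}$ first and then computes their complements).
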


\begin{proof}
First we express the submatrix $T_n(a)_{\rho,\si}$
in terms of $(e_k)_{k=0}^\infty$:
\[
T_n(a)_{\rho,\si}
=[a_{\rho_j-\si_k}]_{j,k=1}^m
=(-1)^p\,a_p\,[(-1)^{\rho_j+\si_k} e_{p+\si_k-\rho_j}]_{j,k=1}^m.
\]
Now we factorize $(-1)^{\rho_j}$ from the $j$-st row, for every $j$,
and $(-1)^{\si_k}$ from the $k$-st column, for every $k$.
Formally,
\[
T_n(a)_{\rho,\si}
=(-1)^p\,a_p\;
\diag([(-1)^{\rho_j}]_{j=1}^m)\;
[e_{p+\si_k-\rho_j}]_{j,k=1}^m\;
\diag([(-1)^{\si_k}]_{k=1}^m),
\]
where $\diag(v)$ is the diagonal matrix generated by a vector $v$.
Passing to determinants we obtain
\begin{equation}\label{eq:Tminor_through_dete}
\det T_n(a)_{\rho,\si}
=(-1)^{\total{\rho}+\total{\si}+mp}\,a_p^m\,
\det [ e_{p+\si_k-\rho_j}]_{j,k=1}^m.
\end{equation}
Defining $\widehat{\rho},\widehat{\si}\in I^m_{n+p}$ by
$\widehat{\rho}=\rho$ and $\widehat{\si}=\si+p^m$
we arrive at \eqref{eq:minor_Ta_to_Tb}.
We are left to compute the complements of $\widehat{\rho}$
and $\widehat{\si}$ in $\{1,\ldots,n+p\}$.
Identifying subsets with strictly increasing tuples
and using \eqref{eq:id_sum} we obtain
\begin{align*}
\id_{n+p}\setminus\widehat{\rho}
&=(\id_n \setminus \rho)\cup (\id_p+n^p)
=(\xi,\id_p+n^p)
=\widehat{\xi},
\\
\id_{n+p}\setminus\widehat{\si}
&=\id_p \cup\left((\id_n + p^n) \setminus (\si+p^m)\right)
=\id_p \cup (\eta+p^d)=(\id_p,\xi+p^d)=\widehat{\eta}.
\qedhere
\end{align*}
\end{proof}

\begin{proof}[Proof of Theorem~\ref{thm:main}]
By Lemma~\ref{lem:from_minor_Ta_to_minor_Tb}
and Lemma~\ref{lem:triangular_minor_xi_eta}
applied to $\widehat{\xi}$ and $\widehat{\eta}$,
\[
\det T_n(a)_{\rho,\si}
=(-1)^{|\rho|+|\si|+pm} a_p^m s_{\la/\mu}
=(-1)^{|\rho|+|\si|+pm} a_p^m s_{\al/\be},
\]
where
\begin{align*}
\la &= \reverse{\widehat{\xi}-\id_{d+p}}, &
\mu &= \reverse{\widehat{\eta}-\id_{d+p}},
\\
\al &= m^{d+p}+\id_{d+p}-\widehat{\eta}, &
\be &= m^{d+p}+\id_{d+p}-\widehat{\xi}.
\end{align*}
With the help of \eqref{eq:id_sum},
these expressions are easily transformed
to the partitions
\eqref{eq:lamu_through_xieta} and \eqref{eq:albe_through_xieta}
from Theorem~\ref{thm:main}.
For example,
\[
\widehat{\xi}-\id_{d+p}
=(\xi,\id_p+n^p)-(\id_d,\id_p+d^p)
=(\xi-\id_d,m^p),
\]
thus $\la=\reverse{(\xi-\id_d,m^p)}=(m^p,\reverse{\xi-\id_d})$.
\end{proof}

\begin{rem}\label{rem:another_way}
Another way to prove Theorem~\ref{thm:main}
is to apply formula \eqref{SS_JT2} and Lemma~\ref{lem:complement}
directly to $\det (T_n(a))_{\rho,\si}$,
without reducing the situation to the triangular case.
The equality between the right-hand sides of
\eqref{eq:main} and \eqref{eq:Alexandersson}
can also be proved directly,
by verifying that $\al/\be=(\la/\mu)^\ast$
or by applying the persymmetric property of Toeplitz matrices.
\end{rem}

\begin{rem}\label{rem:Jacobi_theorem}
A sketch of an alternative proof of Theorem~\ref{thm:main} is as follows.
Consider the following mutually reciprocal formal series
in non-positive powers of $t$:
\[
f(t)=E(-1/t)=\sum_{k=0}^\infty (-1)^k e_k t^{-k},\qquad
g(t)=H(1/t)=\sum_{k=0}^\infty h_k t^{-k}.
\]
Similarly to Lemma~\ref{lem:from_minor_Ta_to_minor_Tb},
\[
\det T_n(a)_{\rho,\si}
=a_p^m \det T_{n+p}(f)_{\widehat{\rho},\widehat{\si}}.
\]
The upper triangular Toeplitz matrices generated by $f$ and $g$
are mutually inverse.
Thus, by Jacobi's theorem about the complementary minor,
every minor of $T_{n+p}(f)$ can be expressed through
a certain minor of $T_{n+p}(g)$.
In our case,
\[
\det T_{n+p}(f)_{\widehat{\rho},\widehat{\si}}
=(-1)^{|\widehat{\xi}|+|\widehat{\eta}|}
\det T_{n+p}(g)_{\widehat{\eta},\widehat{\xi}}
=(-1)^{|\xi|+|\eta|+mp}
\det \bigl[ h_{\widehat{\xi}_k-\widehat{\eta_j}} \bigr]_{j,k=1}^{d+p}.
\]
In order to apply the Jacobi--Trudi formula
and to identify the obtained minor of $T_{n+p}(g)$ with $s_{\al/\be}$,
we have to represent $\widehat{\xi}_k-\widehat{\eta}_j$ as $\al_j-\be_k-j+k$.
This is achieved by taking $\al_j=m+j-\widehat{\eta}_j$
and $\be_k=m+k-\widehat{\xi}_k$.
The corresponding partitions are $\al$ and $\be$ from \eqref{eq:albe_through_xieta}:
\begin{align*}
\al &= m^{p+d}+\id_{d+p}-\widehat{\eta}=(m^p,m^d+\id_d-\eta),\\
\be &= m^{p+d}+\id_{d+p}-\widehat{\xi}=(m^d+\id_d-\xi,0^p).
\end{align*}
The importance of Jacobi's theorem for the theory of Toeplitz matrices
was already noted in various papers, including \cite{BW2006}.
\end{rem}

Let us illustrate Theorem~\ref{thm:main} by an example.


\begin{exmpl}
For $w=5$ and $p=2$, the Laurent polynomial
\eqref{eq:Laurent} takes the form
$a(t) = \sum_{k=-3}^2 a_k t^k$.
Let $a_2=1$, $n=7$, $\xi=(3,6)$ and $\eta=(3,7)$.
Then $d=2$, $m=5$,
$\rho=(1,2,4,5,7)$ and $\si=(1,2,4,5,6)$.
By striking out the rows $\xi$ and the columns $\eta$
in the Toeplitz matrix $T_7(a)$ we obtain the minor
\[
\det(T_7(a)_{\rho,\si}) = 
\detmatr{ccccc}{
a_0 & a_{-1} & a_{-3} & 0 & 0 \\
a_1 & a_0 & a_{-2} & a_{-3} & 0 \\
0 & a_2 & a_0 & a_{-1} & a_{-2} \\
0 & 0 & a_1 & a_0 & a_{-1} \\
0 & 0 & 0 & a_2 & a_1
}.
\]
By Vieta's formulas the $a_k$ are expressed through $e_{2-k}$:
\[
\det(T_7(a)_{\rho,\si}) = 
\detmatr{ccccc}{
e_2 & -e_3 & -e_5 & 0 & 0 \\
-e_1 & e_2 & e_4 & -e_5 & 0 \\
0 & e_0 & e_2 & -e_3 & e_4 \\
0 & 0 & -e_1 & e_2 & -e_3 \\
0 & 0 & 0 & e_0 & -e_1
}
=
\detmatr{ccccc}{
e_2 & -e_3 & -e_5 & e_6 & -e_7 \\
-e_1 & e_2 & e_4 & -e_5 & e_6 \\
-e_{-1} & e_0 & e_2 & -e_3 & e_4 \\
e_{-2} & -e_{-1} & -e_1 & e_2 & -e_3 \\
e_{-4} & -e_{-3} & -e_{-1} & e_0 & -e_1
}.
\]
Using~\eqref{eq:lamu_through_xieta},
we have $\la=(5^2,6-2,3-1)=(5^2,4,2)$ and $\mu=(7-2,3-1)=(5,2)$, then
\[
\det(T_n(a)_{\rho,\si})
= (-1)^{10+19+18} s_{(5,5,4,2)/(5,2)}
= - s_{(5,4,2)/(2)}.
\]
By~\eqref{eq:albe_through_xieta},
$\al=(5,5,3)$ and $\be=(3,1)$, thus
\[
\det(T_n(a)_{\rho,\si}) = - s_{(5,5,3)/(3,1)}.
\]
Note that $\al/\be$ is related with $\la/\mu$ by the flip operation $\ast$:
\[
\la/\mu
= \ydiagram[*(white)] {5+0,2+3,4,2} *[*(black)]{5,5}
= \ydiagram{2+3,4,2}\;,\qquad
\al/\be = \ydiagram{3+2,1+4,3}\;.
\]
\end{exmpl}

\begin{rem}\label{rem:Baxter_Schmidt_and_Trench_formulas}
The Schur function $s_{(n^p)}$ from the right-hand side of \eqref{eq:Tdet}
can be written by the Jacobi--Trudi formula \eqref{SS_JT1}.
Then~\eqref{eq:Tdet} takes the form
\begin{equation}\label{eq:Baxter_Schmidt}
\det T_n(a)=a_p^n (-1)^{pn}
\det \bigl[ h_{n-j+k} \bigr]_{j,k=1}^{p}.
\end{equation}
Baxter and Schmidt \cite{BaxterSchmidt1961}
proved \eqref{eq:Baxter_Schmidt} without using the language of Schur functions.
They denoted by $h_j$ the coefficients of the reciprocal series
to a given series $\sum_k (-1)^k e_k t^k$.
Essentially, they proved the Jacobi theorem about the complementary minor
for this particular situation.
In the banded case, the Schur polynomial $s_{(n^p)}$ can also be written
by \eqref{eq:Schur_quotient}, as the quotient of two determinants:
\begin{equation}\label{eq:Trench_det}
\det T_n(a)=a_p^n (-1)^{pn}
\frac{A_{(n^p)}(x_1,\ldots,x_w)}{V(x_1,\ldots,x_w)}.
\end{equation}
In this form \eqref{eq:Tdet} was deduced by Trench~\cite[Theorem~2]{Trench1985eig}.
These classical results are also explained in \cite[Chapter~2]{BG2005}.
\end{rem}

\begin{prop}\label{prop:skew_Schur_polynomial_as_Toeplitz_minor}
Let $\la/\mu$ be a skew partition and $d=\ell(\la)$.
Define $a$ by \eqref{eq:Laurent} with $p=0$ and $a_p=1$.
Furthermore, let $n=\la_1+d$,
\begin{equation}\label{eq:xieta_from_lamu}
\begin{aligned}
\xi&=\reverse{\la}+\id_d
=(\la_d+1,\la_{d-1}+2,\ldots,\la_1+d),\\
\eta&=\reverse{\mu}+\id_d
=(\mu_d+1,\mu_{d-1}+2,\ldots,\mu_1+d),
\end{aligned}
\end{equation}
and let $\rho,\si\in I^d_n$ be the complements of $\xi,\eta$,
respectively.
Then
\begin{equation}\label{eq:skew_Schur_through_Toeplitz_minor}
s_{\la/\mu}(z_1,\ldots,z_w)
=(-1)^{\total{\la}+\total{\mu}} \det T_n(a)_{\rho,\sigma}.
\end{equation}
\end{prop}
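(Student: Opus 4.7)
The plan is to apply Theorem~\ref{thm:main} (specifically formula \eqref{eq:main}) directly to the data constructed in the statement and then verify that the resulting skew partition equals $\la/\mu$ and that the sign simplifies to $(-1)^{|\la|+|\mu|}$.

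First I would feed the hypotheses into formula \eqref{eq:main}. With $p=0$ and $a_p=1$, that formula reduces to
\[
\det T_n(a)_{\rho,\si}=(-1)^{|\rho|+|\si|}\,s_{\reverse{\xi-\id_d}/\reverse{\eta-\id_d}}(z_1,\ldots,z_w),
\]
because the block $(m^p)=(m^0)$ is the empty tuple. Here $m=n-d=\la_1$, which is consistent with $\xi,\eta\in I^d_n$ and their complements $\rho,\si\in I^m_n$.

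Next I would simplify the skew partition. From the definition of $\xi$ in \eqref{eq:xieta_from_lamu},
\[
\xi-\id_d=(\la_d+1-1,\la_{d-1}+2-2,\ldots,\la_1+d-d)=\reverse{\la},
\]
so $\reverse{\xi-\id_d}=\la$. An identical computation gives $\reverse{\eta-\id_d}=\mu$. Substituting these into the formula above yields
\[
\det T_n(a)_{\rho,\si}=(-1)^{|\rho|+|\si|}\,s_{\la/\mu}(z_1,\ldots,z_w).
\]

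Finally I would verify the sign. From $\xi=\reverse{\la}+\id_d$ we have $|\xi|=|\la|+d(d+1)/2$, and since $\rho$ is the complement of $\xi$ in $\{1,\ldots,n\}$,
\[
|\rho|=\tfrac{n(n+1)}{2}-|\la|-\tfrac{d(d+1)}{2}.
\]
The analogous identity holds for $|\si|$ in terms of $|\mu|$. Adding the two and using that $n(n+1)$ and $d(d+1)$ are even, we obtain $|\rho|+|\si|\equiv |\la|+|\mu|\pmod 2$. Therefore $(-1)^{|\rho|+|\si|}=(-1)^{|\la|+|\mu|}$, and multiplying both sides of the preceding display by this sign yields \eqref{eq:skew_Schur_through_Toeplitz_minor}.

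The only mildly delicate point is the parity bookkeeping in the last step, but it reduces to the trivial observation that consecutive integers have an even product; everything else is a direct substitution into Theorem~\ref{thm:main}. No real obstacle arises, since the definitions in \eqref{eq:xieta_from_lamu} were chosen precisely to invert the assignment $(\rho,\si)\mapsto(\la,\mu)$ coming from \eqref{eq:lamu_through_xieta}.
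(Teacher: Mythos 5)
Your proof is correct and follows essentially the same route as the paper, which simply observes that \eqref{eq:xieta_from_lamu} inverts the assignment \eqref{eq:lamu_through_xieta} and then specializes \eqref{eq:main} to $p=0$, $a_p=1$. You merely spell out the details the paper leaves implicit, in particular the parity check $|\rho|+|\si|\equiv|\la|+|\mu|\pmod 2$, which is carried out correctly.
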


\begin{proof}
Formulas \eqref{eq:xieta_from_lamu} and \eqref{eq:skew_Schur_through_Toeplitz_minor}
are equivalent to \eqref{eq:lamu_through_xieta}
and \eqref{eq:main}, if $a_p=1$ and $p=0$.
\end{proof}

\begin{rem}
Given a skew partition $\la/\mu$ and a number $w$,
the skew Schur polynomial $s_{\la/\mu}$
can be obtained not only as Proposition~\ref{prop:skew_Schur_polynomial_as_Toeplitz_minor} states.
For example, one can choose an arbitrary $n$ satisfying $n\ge\la_1+d$.
Furthermore, instead of \eqref{eq:lamu_through_xieta}
one could use \eqref{eq:albe_through_xieta} with $p=0$
and define $\xi$ and $\eta$ by
\begin{equation}\label{eq:xieta_from_Alexandersson}
\xi=(\mu_1-(m+1),\ldots,\mu_d-(m+d)),\quad
\eta=(\la_1-(m+1),\ldots,\la_d-(m+d)).
\end{equation}
We do not have a general necessary and sufficient condition to determine whether two banded Toeplitz minors correspond to the same skew Schur polynomial.
Two different skew partitions can induce the same skew Schur polynomial,
and the problem of coincidences between skew Schur polynomials is not trivial,
see \cite{ReinerShawWilligenburg2007}.
\end{rem}

\begin{rem}\label{rem:test}
We tested Theorems~\ref{thm:main}, \ref{thm:adj}, \ref{thm:eigvec}
using symbolic computations with Schur functions in SageMath~\cite{SageMath,SageCombinat},
which integrates the Littlewood--Richardson Calculator library (lrcalc)
developed by Anders S. Buch.
In particular, we verified \eqref{eq:main} and \eqref{eq:Alexandersson}
for all possible $p,n,m,\rho,\si$ with $0\le p,n\le 8$ and $a_p=1$.
There were 138812 nonzero answers from 158193 minors in total.
In each example, the determinant in the left-hand side of \eqref{eq:main}
was computed by the recursive expansion.
Such symbolic computations can take much time even for modest values of $m$, say for $m=10$.
We also tested Theorem~\ref{thm:main} 
with pseudorandom values of $p,d,w,n$ ($p,d,w\le 30$, $n\le 200$),
pseudorandom subsets $\xi,\eta$ of $\{1,\ldots,n\}$
and pseudorandom numbers $z_1,\ldots,z_w$.
\end{rem}

\section{Cofactors of banded Toeplitz matrices}
\label{sec:adj}

The purpose of this section is to prove Theorem~\ref{thm:adj}.
Before a formal proof, we show in the next example
how to expand skew Schur functions of the form $s_{(n^p,s)/(r)}$.

\begin{exmpl} \label{exmpl:special_expansion}
The skew Pieri rule \eqref{eq:skew_Pieri} applied to
$(8^3,5)/(2)$ yields
\[
\ydiagram[*(white)] {2+6,8,8,5} *[*(black)]{8,8,8,5}
\;=\;\ydiagram[*(white)] {8,8,6,5} *[*(black)]{8,8,8,5}
\;+\;\ydiagram[*(white)] {8,8,7,4} *[*(black)]{8,8,8,5}
\;+\;\ydiagram[*(white)] {8,8,8,3} *[*(black)]{8,8,8,5}\;,
\]
i.e.
\[
s_{(8^3,5)/(2)}=s_{(8^2,6,5)}+s_{(8^2,7,4)}+s_{(8^2,8,3)}.
\]
The form of the initial diagram implies
that the deleted nodes appear only in the last two rows
of the diagrams in the right-hand side.

The same skew Schur function can be represented as the determinant
of the following matrix, by Jacobi--Trudi formula:
\[
JT((8^3,5)/(2))=
\matr{cccc}{
h_{6} & h_{9} & h_{10} & h_{11} \\
h_{5} & h_{8} & h_{9} & h_{10} \\
h_{4} & h_{7} & h_{8} & h_{9} \\
h_{0} & h_{3} & h_{4} & h_{5}
}.
\]
The columns $2,3,4$ of $JT((8^3,5)/(2))$ coincide
with the columns $2,3,4$ of $JT((8^3,5))$.
In each row of this submatrix,
the degrees of the complete homogeneous polynomials form arithmetic progressions.
The first column of $JT((7^3,5)/(2))$ disturbs this simple structure.
Therefore, we expand the determinant along the first column
and convert each cofactor into a Schur function 
(it is easily done with the rule from Remark~\ref{rule:partition_subindex_h}):
\[
s_{(8^3,5)/(2)} = 
h_{6} s_{(8^2,5)} - h_{5} s_{(9,8,5)} + h_{4} s_{(9^2,5)} - h_{0} s_{(9^3)}.
\]
\end{exmpl}

\begin{lem}\label{lem:LR_specialcase}
Let $n,p\in\bN$ and $r,s\in\{1,\ldots,n\}$, then
\begin{equation}\label{eq:LR_specialcase}
s_{(n^p,s)/(r)} = \sum_{k=\max(0,s-r)}^{\min(n-r,s)}
s_{(n^{p-1},n+s-r-k,k)}.
\end{equation}
\end{lem}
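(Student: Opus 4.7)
The plan is to apply the skew version of Pieri's rule \eqref{eq:skew_Pieri} directly to $\lambda=(n^p,s)$, exactly in the spirit of Example~\ref{exmpl:special_expansion}. The task reduces to enumerating the partitions $\nu$ for which $\lambda/\nu$ is a horizontal strip of size $r$, and then putting them in bijection with the summation index $k$.

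First I would read off the interlacing conditions in \eqref{eq:formal_skew_Pieri}. Since $\lambda_1=\cdots=\lambda_p=n$ and $\lambda_{p+1}=s$, the inequalities $\lambda_{j+1}\le\nu_j\le\lambda_j$ force $\nu_1=\cdots=\nu_{p-1}=n$, leave $\nu_p$ free in the range $s\le\nu_p\le n$, and leave $\nu_{p+1}$ free in the range $0\le\nu_{p+1}\le s$. Setting $k:=\nu_{p+1}$, the weight condition $|\lambda|-|\nu|=r$ becomes $\nu_p=n+s-r-k$. Substituting this back into $s\le\nu_p\le n$ yields $s-r\le k\le n-r$, which combined with $0\le k\le s$ gives exactly the summation range $\max(0,s-r)\le k\le\min(n-r,s)$ from \eqref{eq:LR_specialcase}.

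To close the argument I would verify that each candidate $\nu=(n^{p-1},n+s-r-k,k)$ is indeed a partition, i.e.\ that $\nu_p\ge\nu_{p+1}$. This amounts to $k\le(n+s-r)/2$, which holds automatically because $k\le\min(n-r,s)\le\tfrac{(n-r)+s}{2}$; hence no term in the sum has to be discarded. The whole argument is little more than bookkeeping. The only place that requires a bit of care is precisely this last observation: that the tuples produced by the weight constraint never fail to be partitions, so the skew Pieri rule translates into \eqref{eq:LR_specialcase} without any sign corrections or missing/extra summands.
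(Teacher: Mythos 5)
Your proposal is correct and follows essentially the same route as the paper: both apply the skew Pieri rule \eqref{eq:skew_Pieri} to $\la=(n^p,s)$, read off $\nu_1=\cdots=\nu_{p-1}=n$ and $\nu_p=n+s-r-k$ with $k=\nu_{p+1}$ from \eqref{eq:formal_skew_Pieri}, and derive the summation limits from $0\le k\le s$ and $s\le n+s-r-k\le n$. Your final check that $\nu_p\ge\nu_{p+1}$ is harmless but redundant, since the interlacing conditions already give $\nu_{p+1}\le s\le\nu_p$.
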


\begin{proof}
We apply the skew Pieri rule \eqref{eq:skew_Pieri},
like in Example~\ref{exmpl:special_expansion}.
For $\la=(n^p,s)$, conditions \eqref{eq:formal_skew_Pieri} become
\[
n\le \nu_j\le n\quad (1\le j\le p-1),\quad
s\le \nu_p\le n,\quad 0\le \nu_{p+1}\le s,\quad
np+s-|\nu|=r.
\]
Consequently, $\nu_1=\ldots=\nu_{p-1}=n$,
and if $\nu_{p+1}$ is denoted by $k$, then $\nu_p=n+s-r-k$.
The corresponding diagram looks as follows
(the deleted nodes are filled with the gray background):
\[
\nu=
\begin{array}{|K{10ex}|K{10ex}|K{10ex}|K{10ex}|}\hline
\multicolumn{4}{|K{40ex}|}{ \bigstrut n^{p-1} } \\\hline
\multicolumn{3}{|K{30ex}|}{ n+s-r-k } & \cellcolor{black!20} r+k-s \\\hline
k &\cellcolor{black!20} s-k \\\cline{1-2}
\end{array}\;.
\]
The conditions $0\le k\le s$ and $s\le n+s-r-k\le n$
determine the limits of the summation in \eqref{eq:LR_specialcase}.
\end{proof}

\begin{proof}[Proof of Theorem~\ref{thm:adj}.]
Applying \eqref{eq:main} and \eqref{eq:Alexandersson}
with $d=1$, $\xi=(s)$ and $\eta=(r)$, we immediately obtain
\eqref{eq:adj_through_skew_Schur}
and \eqref{eq:adj_through_skew_Schur_Alexandersson}.
Combining Lemma~\ref{lem:LR_specialcase} with formula \eqref{eq:adj_through_skew_Schur},
we get \eqref{eq:adj_through_Schur}.
In order to prove the identity \eqref{eq:Trench},
consider the skew Schur function from \eqref{eq:adj_through_skew_Schur_Alexandersson}:
\[
s_{((n-1)^p,n-r)/(n-s)} = 
\left|\begin{matrix}
h_{s-1} & h_{n} & \cdots & h_{n+p-1} \\
\vdots & \vdots & \ddots & \vdots \\
h_{s-p} & h_{n-p+1} &\cdots & h_{n} \\
h_{s-r-p} & h_{n-r-p+1} & \cdots & h_{n-r}
\end{matrix}\right|.
\]
As in the Example~\ref{exmpl:special_expansion}, we expand the determinant
by the first column and use Remark~\ref{rule:partition_subindex_h}:
\[
s_{((n-1)^p,n-r)/(n-s)} 
= \sum_{j=1}^p (-1)^j h_{s-j} s_{(n^{j-1},(n-1)^{p-j},n-r)}
+ (-1)^p h_{s-r-p} s_{(n^p)}.
\]
Making the change of variable $k=j-p$ we obtain
\[
s_{((n-1)^p,n-r)/(n-s)} 
= (-1)^p h_{s-r-p} s_{(n^p)} - (-1)^p
\sum_{k=0}^{p-1} (-1)^k h_{s+k-p} s_{(n^{p-k-1},(n-1)^k,n-r)}.
\]
The substitution of the latter expression into \eqref{eq:adj_through_skew_Schur}
leads to \eqref{eq:Trench}.
\end{proof}

\begin{rem}
Identity \eqref{eq:Trench} can also be verified by applying Pieri's formula.
The sum in the right-hand side of \eqref{eq:Trench}
can be expanded into a certain telescopic sum in terms of Schur functions, but the corresponding formulas are rather complicated, and we decided to omit that proof.
\end{rem}

\begin{rem}
The number of summands in the right-hand side of \eqref{eq:adj_through_Schur}
does not depend on $p$ and can be described
as the distance from the cell $(r,s)$ to the boundary of the matrix:
\begin{equation}\label{eq:number_of_summands}
\min\{n-r,s+1\}-\max\{0,s-r\}+1=\min\{r,s,n+1-r,n+1-s\}.
\end{equation}
For example, if $n=5$, then \eqref{eq:number_of_summands} yields the following table:
\[
\matr{K{2ex}K{2ex}K{2ex}K{2ex}K{2ex}}{
1 & 1 & 1 & 1 & 1 \\
1 & 2 & 2 & 2 & 1 \\
1 & 2 & 3 & 2 & 1 \\
1 & 2 & 2 & 2 & 1 \\
1 & 1 & 1 & 1 & 1
}.
\]
Plotting the 3D graph of the function $(r,s)\mapsto \eqref{eq:number_of_summands}$,
i.e. representing the number of summands by the height, we obtain a pyramid.
\end{rem}

We already know that all entries of $\adj(T_n(a))$
can be written as skew Schur functions, up to certain coefficients.
In the case of the entries belonging to the ``border'' of the matrix $\adj(T_n(a))$,
these skew Schur functions simplify to Schur functions.
The next result, being a corollary of Theorem~\ref{thm:adj},
gives a formula for the entries of the first column.

\begin{cor}\label{cor:adj_first_column}
Let $a$ be of the form \eqref{eq:aseries_through_e} and $n\ge 1$.
Then for every $r$ in $\{1,\ldots,n\}$
\begin{equation}\label{eq:adj_first_column}
\adj(T_n(a))_{r,1}=a_p^{n-1} (-1)^{p(n-1)} s_{((n-1)^{p-1},n-r)}.
\end{equation}
\end{cor}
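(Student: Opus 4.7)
The plan is to obtain \eqref{eq:adj_first_column} as a direct specialization of Theorem~\ref{thm:adj}, so no new machinery is needed. The cleanest route is to set $s = 1$ in \eqref{eq:adj_through_Schur} and to observe that the summation range collapses. For $s = 1$ and $r \in \{1,\ldots,n\}$, one has $\max(0, 1-r) = 0$ and $\min(n-r, 0) = 0$, so the sum reduces to its single term at $k = 0$; the resulting partition $((n-1)^{p-1}, n-r, 0)$ is identical to $((n-1)^{p-1}, n-r)$ after discarding the trailing zero, and \eqref{eq:adj_first_column} follows immediately.

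Alternatively, I would read the same identity off from \eqref{eq:adj_through_skew_Schur_Alexandersson}: setting $s = 1$ produces $s_{((n-1)^p, n-r)/(n-1)}$, whose skew diagram has an empty first row since $\mu_1 = n-1 = \lambda_1$. A quick inspection of the corresponding Jacobi--Trudi matrix shows that its first column is $(1, 0, \ldots, 0)^\top$ (using $h_k = 0$ for $k < 0$, together with the fact that $-r - p + 1 < 0$ whenever $r, p \ge 1$), so expanding the determinant along the first column reduces it to that of the bottom-right $p \times p$ block, which is precisely the Jacobi--Trudi matrix for $s_{((n-1)^{p-1}, n-r)}$.

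There is essentially no obstacle here; the only points requiring care are verifying that the summation endpoints in \eqref{eq:adj_through_Schur} coincide for every admissible $r$ and confirming the skew-partition reduction, both of which are routine. Between the two approaches I would present the first, since it bypasses even the block-triangular argument and exhibits the result as a one-line specialization of a formula already established.
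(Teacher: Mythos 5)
Your proposal is correct, and your second route (specializing \eqref{eq:adj_through_skew_Schur_Alexandersson} at $s=1$ and checking via the Jacobi--Trudi matrix that $s_{((n-1)^p,n-r)/(n-1)}$ collapses to $s_{((n-1)^{p-1},n-r)}$) is exactly the paper's proof. Your preferred first route, specializing \eqref{eq:adj_through_Schur} and observing that the summation range degenerates to the single term $k=0$, is an equally valid one-line variant; the only caveat is that \eqref{eq:adj_through_Schur} requires $p\ge1$, which is implicit in the corollary anyway since $(n-1)^{p-1}$ is meaningless for $p=0$.
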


\begin{proof}
Apply \eqref{eq:adj_through_skew_Schur_Alexandersson} with $s=1$
and the identity $s_{((n-1)^p,n-r)/(n-1)}=s_{((n-1)^{p-1},n-r)}$.
Alternatively, use \eqref{eq:adj_through_skew_Schur} with $s=1$
and the flip operation ${}^\ast$.
\end{proof}

\section{Eigenvectors of banded Toeplitz matrices}
\label{sec:eigvec}

The starting point of our approach to eigenvectors
is the elementary observation that for a non-invertible square matrix $A$,
\[
A\,\adj(A) = \det(A) I_n = 0_{n\times n}.
\]
Thus every column of the adjugate matrix $\adj(A)$
belongs to the null-space of $A$.
Applying this reasoning to $A-xI$ instead of $A$
we arrive at the following statement
which was already mentioned and used in \cite[Section~2]{BGM2010}.

\begin{lem}\label{lem:coladj}
Let $A\in\bC^{n\times n}$ and $x$ be an eigenvalue of $A$.
Then every column $v$ of the matrix $\adj(A-xI_n)$ satisfies $Av=xv$.
\end{lem}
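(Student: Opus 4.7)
The plan is to derive this from the fundamental adjugate identity $B\,\adj(B) = \det(B)\,I_n$, which holds for every square matrix $B$ over $\bC$. Applying it to $B = A - xI_n$ gives
\[
(A - xI_n)\,\adj(A - xI_n) = \det(A - xI_n)\,I_n.
\]
Since $x$ is an eigenvalue of $A$, the characteristic polynomial vanishes at $x$, i.e.\ $\det(A - xI_n) = 0$. Hence the matrix product above is the zero matrix, so each column $v$ of $\adj(A - xI_n)$ satisfies $(A - xI_n)v = 0$, which rearranges to $Av = xv$.

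There is no genuine obstacle here: the statement is essentially a reformulation of the adjugate identity once one notes that $\det(A - xI_n) = 0$. The reason to isolate it as a lemma is purely strategic, since it is the bridge between Theorem~\ref{thm:adj} and the eigenvector formula of Theorem~\ref{thm:eigvec}: the column-wise formulas for $\adj(T_n(a) - xI_n) = \adj(T_n(a - x))$, obtained by applying \eqref{eq:adj_through_skew_Schur_Alexandersson} to the shifted symbol $a - x$ whose zeros are $z_1,\ldots,z_w$, automatically produce vectors in $\ker(T_n(a) - xI_n)$, with no guarantee of non-triviality. This explains the caveat in Theorem~\ref{thm:eigvec} that the constructed $v$ may happen to be the zero vector.
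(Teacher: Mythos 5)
Your proof is correct and follows exactly the paper's argument: the paper derives Lemma~\ref{lem:coladj} from the identity $A\,\adj(A)=\det(A)I_n$ applied to $A-xI_n$, noting that the determinant vanishes at an eigenvalue. Nothing to add.
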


\begin{proof}[Proof of Theorem~\ref{thm:eigvec}.]
Let us define $v\in\bC^n$ as the first column of $\adj(T_n(a-x))$ multiplied by $(-1)^{p(n-1)}$.
Then $v_r=s_{((n-1)^{p-1},n-r)}$ by \eqref{eq:adj_first_column}, and by Lemma~\ref{lem:coladj} $T_n(a)=xv$.
This part is not only true for Laurent polynomials,
but also for Laurent series of the form \eqref{eq:aseries_through_e}.


Now suppose that $a$ is a Laurent polynomial,
and $a-x$ has $w$ distinct zeros $z_1,\ldots,z_w$.
Recall the notation~\eqref{eq:generalized_Vandermonde}
and consider the matrices
\[
B = A_{((n-1)^{p-1},n-r)}(z_1,\ldots,z_w),\qquad
D = A_{(n^p)}(z_1,\ldots,z_w),
\]
i.e.
\[
B =
\matr{cK{7ex}c}{
\medstrut z_1^{n+w-2} & \dots & z_w^{n+w-2} \\
\medstrut \vdots & \ddots & \vdots\\
\medstrut z_1^{n+w-p} & \cdots & z_w^{n+w-p} \\
\medstrut \cellcolor{black!15}
z_1^{n-r+w-p}
& \cellcolor{black!15} \dots
& \cellcolor{black!15} z_w^{n-r+w-p} \\
\medstrut z_1^{w-p-1} & \dots & z_w^{w-p-1} \\
\medstrut \vdots & \ddots & \vdots\\
\medstrut z_1^0  & \dots & z_w^0
},
\qquad
D =
\matr{cK{9ex}c}{
\medstrut \cellcolor{black!15} z_1^{n+w-1} &
\cellcolor{black!15} \dots & 
\cellcolor{black!15} z_w^{n+w-1} \\
\medstrut z_1^{n+w-2} & \dots & z_w^{n+w-2} \\
\medstrut \vdots & \ddots & \vdots\\
\medstrut z_1^{n+w-p} & \cdots & z_w^{n+w-p} \\
\medstrut z_1^{w-p-1} & \dots & z_w^{w-p-1} \\
\medstrut \vdots & \ddots & \vdots\\
\medstrut z_1^0  & \dots & z_w^0
}.
\]
After deleting the $p$-th row of $B$
and the first row of $D$
(these rows are filled with gray background),
one obtains the same submatrix.
Therefore the cofactor of the entry $(p,j)$ in $B$
coincides with the cofactor of the entry $(1,j)$ in $D$,
up to the factor $(-1)^{p-1}$.
Denote by $C_j$ this cofactor divided by the
Vandermonde polynomial $V=V(z_1,\ldots,z_w)$:
\begin{equation}\label{eq:C_through_adj}
C_j
= \frac{(\adj B)_{j,p}}{V}
= \frac{(-1)^{p-1}(\adj D)_{j,1}}{V}.
\end{equation}
The matrix $D$ does not depend on $r$,
hence the numbers $C_j$ neither depend on $r$.
Here is a more explicit formula for $C_j$:
\[
C_j=\frac{(-1)^{p+j}}{V}
\detmatr{cccccc}{
\medstrut z_1^{n+w-2} & \dots & z_{r-1}^{n+w-2} &
z_{r+1}^{n+w-2} & \dots & z_w^{n+w-2} \\
\medstrut \vdots & \ddots & \quad &
\vdots & \ddots & \vdots \\
\medstrut z_1^{n+w-p} & \cdots & z_{r-1}^{n+w-p} &
z_{r+1}^{n+w-p} & \cdots & z_w^{n+w-p} \\
\medstrut z_1^{w-p-1} & \dots & z_{r-1}^{w-p-1} &
z_{r+1}^{w-p-1} & \dots & z_w^{w-p-1} \\
\medstrut \vdots & \ddots & \vdots &
\vdots & \ddots & \vdots \\
\medstrut z_1^0  & \dots & z_{r-1}^0 &
z_{r+1}^0  & \dots & z_w^0
}.
\]
Applying \eqref{eq:Schur_quotient} we represent the Schur polynomial
$s_{((n-1)^{p-1},n-r)}$
from the right--hand side of \eqref{eq:eigvec_components}
as a quotient of two determinants.
Expanding the numerator by the $p$th row we obtain
\begin{equation}\label{eq:v_by_expansion}
v_r = \frac{\det B}{V}=\sum_{j=1}^w C_j B_{p,j},
\end{equation}
which coincides with \eqref{eq:eigvec_Trench}.
\end{proof}

\begin{rem}\label{rem:eigvec_confluent}
Formula \eqref{eq:eigvec_Trench} can be generalized to the case
when some of the numbers $z_1,\ldots,z_w$ coincide.
In this case we apply \eqref{eq:C_through_adj}
and \eqref{eq:v_by_expansion}
with $B,D,V$ defined as in Remark~\ref{rem:confluent}.
\end{rem}

\begin{rem}\label{rem:Trench_eigvec}
The vector $C=[C_j]_{j=1}^w$, multiplied by $(-1)^{p-1}$,
is the $p$-th column of the matrix $\adj(D)$.
Since $\det(D)=0$, this vector belongs to the null-space of $D$.
In \cite{Trench1985eig}, Trench wrote Toeplitz matrices
in the transposed form $[a_{k-j}]_{j,k=1}^n$,
thus instead of \eqref{eq:eigvec_Trench}
he obtained a linear combination
of geometric progressions with \emph{increasing} powers.
Up to these technical changes, he described the vector $C$
as a nonzero solution of the linear system $DC=0_w$.
Thus the result obtained by Trench is slightly more general
than \eqref{eq:eigvec_Trench}.
\end{rem}

\begin{exmpl}\label{exmpl:eigvec_Hessenberg_Toeplitz}
If $a$ is of the form \eqref{eq:aseries_through_e} with $p=1$,
then the corresponding Toeplitz matrices $T_n(a)$
have only one nonzero diagonal below the main diagonal
and are known as \emph{upper Hessenberg--Toeplitz matrices}.
In this case, \eqref{eq:eigvec_components} simplifies to
\begin{equation}\label{eq:eigvec_Hessenberg_Toeplitz}
v_r=h_{n-r}.
\end{equation}
An analogue of the formula \eqref{eq:eigvec_Hessenberg_Toeplitz}
for lower Hessenberg--Toeplitz matrices was established
in \cite[Theorem 1.1]{BBGM2012}.
\end{exmpl}

\section*{Acknowledgements}

E.~A.~Maximenko is grateful to Sergei M.\ Grudsky, Albrecht B\"{o}ttcher,
and Johan Manuel Bogoya for joint investigations on Toeplitz matrices,
to Rom\'{a}n Higuera Garc\'{i}a for the joint study
of Schur polynomials and Toeplitz eigenvectors,
and to Gabino S\'{a}nchez Arzate for the joint study
of Trench's article \cite{Trench1985inv}.

\bigskip\noindent
Egor A. Maximenko\\
\myurl{http://orcid.org/0000-0002-1497-4338}\\
e-mail: emaximenko@ipn.mx

\medskip\noindent
Mario Alberto Moctezuma-Salazar\\
\myurl{http://orcid.org/0000-0003-4579-0598}\\
e-mail: m.a.mocte@gmail.com

\bigskip\noindent
Instituto Polit\'{e}cnico Nacional\\
Escuela Superior de F\'{i}sica y Matem\'{a}ticas\\
Apartado Postal 07730\\
Ciudad de M\'{e}xico, Mexico


\begin{thebibliography}{15}

\bibitem{A2012}
Alexandersson, P.\ (2012):
Schur polynomials, banded Toeplitz matrices and Widom's formula,
Electron.\ J.\ Combin.\ 19:4, P22,\\
\myurl{http://www.combinatorics.org/ojs/index.php/eljc/article/view/v19i4p22}.

\bibitem{BG2017}
Barrera, M.; Grudsky, S.M.\ (2017):
Asymptotics of eigenvalues for pentadiagonal symmetric Toeplitz matrices,
Oper. Theory Adv. Appl.~259, 51--77,\\
\doi{10.1007/978-3-319-49182-0\_7}.

\bibitem{BaxterSchmidt1961}
Baxter, G.; Schmidt, P.\ (1961):
Determinants of a certain class of non-Hermitian Toeplitz matrices,
Math.\ Scand.\ 9, 122--128,\\
\myurl{http://www.mscand.dk/article/view/10630/8651}.

\bibitem{BBGM2012}
Bogoya, J.M.; B\"{o}ttcher, A.; Grudsky, S.M.; Maksimenko, E.A.\ (2012):
Eigenvectors of Hessenberg Toeplitz matrices and a problem by Dai, Geary, and Kadanoff,
Linear Algebra Appl.\ 436:9, 3480--3492,
\doi{10.1016/j.laa.2011.12.012}.

\bibitem{BBGM2015}
Bogoya, J.M.; B\"{o}ttcher, A.; Grudsky, S.M.; Maximenko, E.A.\ (2015):
Maximum norm versions of the Szeg\H{o} and Avram-Parter theorems for Toeplitz matrices,
J.\ Approx.\ Theory 196, 79--100,
\doi{10.1016/j.jat.2015.03.003}.

\bibitem{BBGM2016}
Bogoya, J.M.; B\"{o}ttcher, A.; Grudsky, S.M.; Maximenko, E.A.\ (2016):
Eigenvectors of Hermitian Toeplitz matrices with smooth simple-loop symbols,
Linear Algebra Appl.\ 493, 606--637,
\doi{10.1016/j.laa.2015.12.017}.

\bibitem{BorodinOkounkov2000}
Borodin, A.; Okounkov, A.\ (2000):
A Fredholm determinant formula for Toeplitz determinants,
Integr.\ Equ.\ Oper.\ Theory 37:4, 386--396,
\doi{10.1007/BF01192827}.

\bibitem{BG2005}
B\"{o}ttcher, A.; Grudsky, S.M.\ (2005):
Spectral Properties of Banded Toeplitz Matrices, SIAM, Philadelphia,
\doi{10.1137/1.9780898717853}.

\bibitem{BGM2010}
B\"{o}ttcher, A.; Grudsky, S.M.; Maksimenko, E.A.\ (2010):
On the structure of the eigenvectors of large Hermitian Toeplitz band matrices,
Oper. Theory Adv. Appl.\ 210, 15--36,
\doi{10.1007/978-3-0346-0548-9\_2}.

\bibitem{BFGM2015}
B\"{o}ttcher, A.; Fukshansky, L.; Garcia, S.R.; Maharaj, H.\ (2015):
Toeplitz determinants with perturbations in the corners,
J.\ Funct.\ Analysis 268:1, 171--193,
\doi{10.1016/j.jfa.2014.10.023}.

\bibitem{BS1999}
B\"{o}ttcher, A.; Silbermann, B.\ (1999):
Introduction to Large Truncated Toeplitz Matrices,
Springer-Verlag, New York.

\bibitem{BS2006}
B\"{o}ttcher, A.; Silbermann, B.\ (2006):
Analysis of Toeplitz Operators, 2nd ed.,
Springer-Verlag, Berlin, Heidelberg, New York,
\doi{10.1007/3-540-32436-4}.

\bibitem{BW2006}
B\"{o}ttcher, A.; Widom, H.\ (2006):
Szeg\"{o} via Jacobi,
Linear Algebra Appl.\ 419, 656--667,
\doi{10.1016/j.laa.2006.06.009}.

\bibitem{BumpDiaconis2002}
Bump, D.; Diaconis, P.\ (2002):
Toeplitz minors,
J.\ Combin.\ Theory Ser.\ A 97, 252--271,
\doi{10.1006/jcta.2001.3214}.

\bibitem{DeiftItsKrasovsky2013}
Deift, P.; Its, A.; Krasovsky, I.\ (2013):
Toeplitz matrices and Toeplitz determinants under the impetus of the Ising model.
Some history and some recent results,
Comm.\ Pure Appl.\ Math. 66:9, 1360--1438,
\doi{10.1002/cpa.21467}.

\bibitem{Elouafi2014}
Elouafi, M.\ (2014):
On a relationship between Chebyshev polynomials and Toeplitz determinants,
Appl.\ Math.\ Comput.\ 229:1, 27--33,
\doi{10.1016/j.amc.2013.12.029}.

\bibitem{Elouafi2015}
Elouafi, M.\ (2015):
A Widom like formula for some Toeplitz plus Hankel determinants,
J.\ Math.\ Anal.\ Appl.\ 422:1, 240--249,
\doi{10.1016/j.jmaa.2014.08.043}.

\bibitem{GaroniSerra2016}
Garoni, C.; Serra-Capizzano, S.\ (2016):
The theory of locally Toeplitz sequences: a review, an extension, and a few representative applications,
Bol.\ Soc.\ Mat.\ Mex.\ 22:2, 529--565,
\doi{10.1007/s40590-016-0088-8}.

\bibitem{Gessel1990}
Gessel, I.M.\ (1990):
Symmetric functions and P-recursiveness,
J.\ Combin.\ Theory Ser.\ A 53:2, 257--285,
\doi{10.1016/0097-3165(90)90060-A}.

\bibitem{GrenanderSzego1958}
Grenander, U.; Szeg\H{o}, G.\ (1958):
Toeplitz Forms and Their Applications,
University of California Press, Berkeley, Los Angeles.

\bibitem{Lascoux2003}
Lascoux, A.\ (2003):
Symmetric Functions and Combinatorial Operators on Polynomials,
In series: CBMS Regional Conference Series in Mathematics, vol.~99,
co-publication of the AMS and Conference Board of the Mathematical Sciences,
Providence, Rhode Island,
\doi{10.1090/cbms/099}.

\bibitem{Littlewood1950}
Littlewood, D.E.\ (1950):
The Theory of Group Characters and Matrix Representations of Groups,
2nd ed., The Clarendon Press, Oxford University Press, Oxford.

\bibitem{Macdonald1995}
Macdonald, I.G.\ (1995):
Symmetric functions and Hall polynomials, 2nd ed.,
The Clarendon Press, Oxford University Press, Oxford.

\bibitem{ReinerShawWilligenburg2007}
Reiner, V.; Shaw, K.M.; Willigenburg, S.~van\ (2007):
Coincidences among skew Schur functions,
Adv.\ Math.\ 216:1, 118--152,
\doi{10.1016/j.aim.2007.05.006}.

\bibitem{SageMath}
The Sage Developers (2017):
{S}ageMath, the {S}age {M}athematics {S}oftware {S}ystem ({V}ersion 7.5.1),
\myurl{http://www.sagemath.org},
\doi{10.5281/zenodo.28514}.

\bibitem{SageCombinat}
The {S}age-{C}ombinat community (2008):
{S}age-{C}ombinat: enhancing Sage as a toolbox
for computer exploration in algebraic combinatorics,
\myurl{http://combinat.sagemath.org}.

\bibitem{Stanley1999}
Stanley, R.P.\ (1999):
Enumerative Combinatorics, vol.\ 2,
Cambridge University Press, Cambridge.

\bibitem{TracyWidom2001}
Tracy, C.A.; Widom, H.\ (2001):
On the distributions of the lengths of the longest monotone subsequences in random words,
Probab.\ Theory Relat.\ Fields 119, 350--380,
\doi{10.1007/s004400000107}.

\bibitem{Trench1985inv}
Trench, W.F.\ (1985):
Explicit inversion formulas for Toeplitz band matrices,
SIAM\ J.\ on Algebraic and Discrete Methods
(transformed to SIAM J.\ Matrix Anal.\ Appl.) 6:4, 546--554,
\doi{10.1137/0606054}.

\bibitem{Trench1985eig}
Trench, W.F.\ (1985):
On the eigenvalue problem for Toeplitz band matrices,
Linear Algebra Appl.\ 64, 199--214,
\doi{10.1016/0024-3795(85)90277-0}.

\bibitem{Widom1958}
Widom, H.\ (1958):
On the eigenvalues of certain Hermitean operators,
Trans.\ Amer.\ Math.\ Soc.\ 88:2, 491--522,
\doi{10.2307/1993228}.

\end{thebibliography}
\end{document}